\documentclass[11pt]{amsart}

\usepackage[utf8]{inputenc}
\usepackage[T1]{fontenc}
\usepackage{amsmath,amssymb,amsthm,mathtools}
\usepackage{enumitem}
\usepackage{booktabs}
\usepackage{array}
\usepackage{longtable}
\usepackage[unicode=true,
bookmarks=true,bookmarksnumbered=true,bookmarksopen=true,bookmarksopenlevel=2,
breaklinks=false,pdfborder={0 0 1},backref=false,colorlinks=false]
{hyperref}
\hypersetup{colorlinks,linkcolor=blue,anchorcolor=blue,citecolor=blue}
\usepackage{cite}

\numberwithin{equation}{section}
\newtheorem{theorem}{Theorem}[section]
\newtheorem{proposition}[theorem]{Proposition}
\newtheorem{lemma}[theorem]{Lemma}

\theoremstyle{definition}
\newtheorem{definition}[theorem]{Definition}
\newtheorem{externaltheorem}[theorem]{External theorem}
\theoremstyle{remark}

\newcommand{\Om}{\Omega_n}
\newcommand{\E}{\mathbb E}
\newcommand{\BMO}{\mathrm{BMO}}
\newcommand{\bmo}{\mathrm{bmo}}
\DeclareMathOperator{\id}{id}
\DeclareMathOperator{\Tr}{Tr}
\DeclareMathOperator{\VN}{VN}
\DeclareMathOperator{\Fix}{Fix}
\newcommand{\cN}{\mathcal N}
\newcommand{\cM}{\mathcal M}
\newcommand{\cF}{\mathcal F}
\newcommand{\elln}{\ell_2^n}
\newcommand{\one}{\mathbf 1}

\title{The Endpoint Fractional Riesz Estimate on the Hamming Cube}

\author[Zhendong  Xu]{Zhendong Xu}
\address{
	Department of Mathematical Sciences and the Research Institute of Mathematics, Seoul National University, Gwanak-ro 1, Gwanak-gu, Seoul 08826, Republic of Korea}
\email{zhendong\_xu\_97@snu.ac.kr}

\author[Hao Zhang]{Hao Zhang}
\address{
	Instituto de Ciencias Matem\'{a}ticas, Consejo Superior de Investigaciones Cient\'{i}ficas, C/ Nicol\'{a}s Cabrera 13-15, 28049, Madrid, Spain}
\email{hao.zhang@icmat.es}
\date{}

\subjclass[2020]{42C10; 46L52; 47A60.}
\keywords{Hamming cube, fractional Riesz transform, semigroup BMO,
	complex interpolation, dimension-free estimate}

\begin{document}
	
	\begin{abstract}
		Let \(1<p<2\) and \(D_j\) be the discrete partial derivative on the Hamming cube $\Omega_n = \{ -1, 1\}^n$. Let \(\Delta=\sum_{j=1}^nD_j\) be the discrete Laplacian.  
		We prove the endpoint inequality
		\[
		\left\|\left(\sum_{j=1}^n|D_jf|^2\right)^{1/2}\right\|_{p}
		\lesssim_p\|\Delta^{1/p}f\|_{p}, \quad \forall f:\Omega_n \to \mathbb C.
		\]
		This result answers the conjecture proposed by Naor, Eskenazis and Ivanisvili (see \cite{BenEfraimLustPiquard,IvanisviliVolberg} or \cite[Remark 45]{EskenazisIvanisvili}). The proof relies heavily on the noncommutative semigroup BMO theory \cite{JungeMei}.
		
	\end{abstract}
	
	\maketitle
	
	\section{Introduction}\label{sec:introduction}
	
	Dimension-free estimate for discrete Riesz transforms on the Hamming cube was first shown by Lust-Piquard in the remarkable work \cite{LP1998} and later was found great applications in the theory of sharp metric $X_p$ inequalities and Banach space embedding theory \cite{NA2016}. Further devolopments in this direction can be found in \cite{LP1999,LP2004,BenEfraimLustPiquard,JMP2018,DomelevoIvanisviliPetermichlVolberg} etc.  
	
	Let $\Omega_n = \{ -1, 1\}^n$ be the Hamming cube with normalized counting measure. For a function $f:\Omega_n \to \mathbb C$, define the discrete partial derivative $D_j$ ($1\leq j\leq n$) and the discrete Laplacian $\Delta$ by
	\[
	D_jf(x)=\frac{f(x)-f(x^{(j)})}{2},
	\qquad
	\Delta f = \sum_{j=1}^n D_j f,
	\]
	where $x^{(j)}$ denotes the point in $\Omega_n$ obtained by flipping the $j$-th coordinate of $x$. Denote by $\nabla = (D_1, \ldots, D_n)$ the discrete gradient. Lust-Picard showed that for every $2 \leq p < \infty$, the following dimension-free estimate holds:
	\[
	C_p^{-1}\| \Delta^{1/2} f \|_p \leq \| \nabla f \|_{L_p(\Om;\ell_2^n)} \leq C_p \| \Delta^{1/2}f \|_p,
	\]
	while the upper bound fails for $1 < p < 2$ via Lamberton's counterexample, see \cite{LP1998}. Naor and Schechtman proved that an estimate
	\[
	\|\nabla f\|_{L_p(\Om;\ell_2^n)}\le C_\beta\|\Delta^\beta f\|_{p},
	\qquad 1<p<2,
	\]
	with \(C_\beta\) independent of the dimension implies that
	\(\beta\ge 1/p\); see \cite[Lemma~5.5]{BenEfraimLustPiquard}.  In the complementary direction, when $1<p<2$, Eskenazis and Ivanisvili showed that for every
	\(\varepsilon\in(0,1/2)\),
	\begin{equation}\label{var}
	\|\nabla f\|_{L_p(\Om;\ell_2^n)}
	\lesssim \frac{1}{\varepsilon}
	\|\Delta^{1/p+\varepsilon}f\|_{p};
	\end{equation}
	 see \cite[Proposition~43]{EskenazisIvanisvili}. In addition, they \cite{EskenazisIvanisvili} also obtained
	\[
	\|\nabla f\|_{L_p(\Om;\ell_2^n)}
	\lesssim \log(n+1)
	\|\Delta^{1/p}f\|_{p}.
	\]
	After the communication with Eskenazis, we learn that Naor has a different original proof for \eqref{var}, which has not been published.
	Recently, Ivanisvili and Volberg\cite{IvanisviliVolberg} found another different proof of \eqref{var}. 
	Naor, Eskenazis and Ivanisvili conjectured the dimension-free endpoint estimate in \cite[Remark~45]{EskenazisIvanisvili}.  
	
	
	\medskip
	
	Our main result answers this conjecture affirmatively.
	
	\begin{theorem}\label{thm:endpoint}
		For every \(1<p<2\) and every complex-valued function \(f\)
		on \(\Omega_n=\{-1,1\}^n\),
		\begin{equation*}
			\|\nabla f\|_{L_p(\Om;\ell_2^n)}
			\lesssim_p\|\Delta^{1/p}f\|_{p}.
		\end{equation*}
	\end{theorem}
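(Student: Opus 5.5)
Our plan is to obtain the theorem by complex interpolation between two endpoint estimates for an analytic family of operators: the $L_2$ estimate at $p=2$ and an $L_\infty\to\BMO$ estimate at $p=\infty$... but since $1<p<2$ lies below $2$, the natural endpoints are instead $p=2$ and $p=1$. We write the estimate in operator form. Set $T_z = \nabla \Delta^{-z}$, acting on mean-zero functions, with values in $L_p(\Om;\ell_2^n)$. The goal is $\|T_{1/p}\|_{L_p\to L_p(\ell_2^n)}\lesssim_p 1$. We realize $L_p(\Om;\ell_2^n)$ inside a noncommutative $L_p$ space: take $\cM = L_\infty(\Om)\bar\otimes B(\ell_2^n)$, or more economically the fermionic/Clifford model used by Lust-Piquard, in which $D_j$ is a commutator with a generator. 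Square functions then become column norms $\|(\sum_j|D_jf|^2)^{1/2}\|_p = \|\sum_j D_jf\otimes e_{j1}\|_{L_p(\cM)}$.

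The endpoints are as follows.

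At $\Re z=1/2$ on $L_2$, we have $\|\nabla\Delta^{-1/2-it}f\|_2=\|\Delta^{-it}f\|_2=\|f\|_2$, since $\sum_j\|D_jf\|_2^2=\langle\Delta f,f\rangle$ and $\Delta^{-it}$ is unitary on mean-zero functions.

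At $\Re z=1$, the endpoint is $L_\infty\to\BMO$ in dual form. Let $\{P_t=e^{-t\Delta}\}$ be the heat semigroup. We aim to show that $\nabla\Delta^{-1-it}$ maps $L_\infty(\Om)$ (mean zero) boundedly into the column semigroup $\BMO_c(\cM)$ of Junge--Mei. By duality this is the dual form of the weak endpoint $p=1$, namely $H_1\to L_1$. The Junge--Mei interpolation theorem $[\BMO(\cM),L_2(\cM)]_{\theta}=L_{q}$ with $1/q=\theta/2$ then handles $q>2$. We therefore interpolate the \emph{adjoint} family $T_z^*=\Delta^{-\bar z}\,\mathrm{div}$. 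Here $T_{1/p}^*$ bounded on $L_{p'}$ with $p'>2$ is equivalent to the theorem. We have the $L_2$ bound at $\Re z=1/2$, and at $\Re z=1$ we need
\[
\Big\|\Delta^{-1-it}\sum_j D_j g_j\Big\|_{\BMO_{P}}\lesssim (1+|t|)^{C}\,\Big\|\Big(\sum_j |g_j|^2\Big)^{1/2}\Big\|_\infty ,
\]
or its row/column variant. Interpolation at $\theta=2/p'$ gives $\Re z=1/2\cdot\theta^{-1}$... Concretely, solving $(1-\theta)\cdot\tfrac12+\theta\cdot 1=1/p$ with $\theta=1-2/p'=2/p-1$ is consistent. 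The Stein-type admissible growth is handled by the polynomial bound in $|t|$ together with an $e^{z^2}$ factor.

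The main obstacle is the $\BMO$ endpoint. The plan is to compute the semigroup $\BMO$ norm $\sup_t\|P_t|h|^2-|P_th|^2\|_\infty^{1/2}$ for $h=\Delta^{-1-it}\mathrm{div}\,g$. First, we use the identity $D_jP_t=e^{-t}P_tD_j$ on the cube, together with $D_j^2=D_j$. Second, we write $\Delta^{-1-it}=\Gamma(1+it)^{-1}\int_0^\infty s^{it}P_s\,ds$. Third, we split the integral at $s\sim t$. For $s\gtrsim t$, the factor $e^{-s}$ from commuting $D_j$ past $P_s$ gives decay, and a pointwise bound $|P_s\sum_jD_jg_j|\lesssim (1-e^{-s})^{-1/2}\|g\|_{L_\infty(\ell_2)}$ (a Bakry--\'Emery/Lust-Piquard type gradient bound on the cube) is integrable. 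For $s\lesssim t$, we bound $P_t|h_s|^2-|P_th_s|^2$ by $\int_0^t P_{t-r}\Gamma(P_rh_s)\,dr$, using the carré du champ $\Gamma(f)=\tfrac12\sum_j|D_jf|^2$ and $\Gamma$-contractivity $\Gamma(P_rf)\le e^{-2r}P_r\Gamma(f)$, which holds on the cube with curvature $1$. The danger is a logarithmic loss exactly at the endpoint; this is where the noncommutative (column) $\BMO$ structure and the curvature bound must absorb it. If the direct estimate fails, our fallback is to prove the $\BMO$ bound for the Riesz-type operators $D_j\Delta^{-1/2}$ composed with the bounded imaginary powers, and to obtain the extra $\Delta^{-1/2}$ via the $\Gamma_2\ge\Gamma$ criterion of Junge--Mei.
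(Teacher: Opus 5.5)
Your interpolation bookkeeping and the $L_2$ endpoint at $\Re z=1/2$ are correct. The gap is that the $\Re z=1$ endpoint is not merely hard: it is false, so the scheme cannot close. Already at $t=0$ you need $\|\Delta^{-1}\mathrm{div}\,G\|_{\BMO}\lesssim\|G\|_{L_\infty(\Om;\elln)}$ uniformly in $n$, where $\mathrm{div}\,G=\sum_jD_jg_j$. This fails with growth $\sqrt n$, both for the Poisson seminorms used by Junge--Mei and for the heat $\bmo$ you wrote down.

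\emph{Poisson version.} Let $\mathbf 1=(1,\ldots,1)$, $U=\phi(n^{-1/2}\sum_jx_j)$ with $\phi(s)=\min\{\max\{s,0\},\sqrt n/2\}$, and $G=\nabla U$. Then $|D_jU|\le n^{-1/2}$, so $\|G\|_{L_\infty(\Om;\elln)}\le1$, and $\Delta^{-1}\mathrm{div}\,G=U-\E U$. The kernel $e^{-\sqrt\Delta}(\mathbf 1,\cdot)$ is a mixture of heat kernels $e^{-v\Delta}(\mathbf 1,\cdot)$, under which $n^{-1/2}\sum_jy_j=\sqrt n\,e^{-v}+O(1)$. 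The subordinator gives mass $\gtrsim1$ both to $v\le\log2$ (where $U\approx\sqrt n/2$) and to $v\ge\log 8$ (where $U\lesssim\sqrt n/8+1$). So $\|U\|_{\bmo(P)}\gtrsim\sqrt n$, and hence the same holds for $\BMO(P)$ and $\BMO(\widehat\Gamma)$ by the Junge--Mei comparison.

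\emph{Heat version.} Let $K=\nabla e^{-\Delta}\Delta^{-1}\mathrm{div}$, which is self-adjoint, and $G_0=\prod_k(1+x_k)\,e_1$, so $\|G_0\|_{L_1(\Om;\elln)}=1$. Then $e^{-\Delta}\Delta^{-1}\mathrm{div}\,G_0=\int_0^{1/e}x_1\prod_{k\ne1}(1+rx_k)\,dr$. For $i\ne1$ its $D_i$-derivative has modulus $\int_0^{1/e}r\prod_{k\ne1,i}(1+rx_k)\,dr$. Using $\E\prod_{k\ne1,i}(1+rx_k)=1$ and $\|v\|_{\ell_2}\ge(n-1)^{-1/2}\sum_{i\ne1}|v_i|$, one gets $\|KG_0\|_{L_1(\Om;\elln)}\ge\sqrt{n-1}/(2e^2)$. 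By duality, some $G$ in the unit ball of $L_\infty(\Om;\elln)$ has $\|\nabla e^{-\Delta}U\|_{L_\infty(\Om;\elln)}\gtrsim\sqrt n$ for $U=\Delta^{-1}\mathrm{div}\,G$. The reverse local Poincar\'e inequality $2t\,\Gamma_\Delta(T_tU,T_tU)\le T_t|U|^2-|T_tU|^2$, a consequence of $\Gamma_\Delta(T_rx,T_rx)\le T_r\Gamma_\Delta(x,x)$, then gives $\|U\|_{\bmo(T)}\gtrsim\sqrt n$.

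Row/column variants cannot help, since the output is scalar. Your auxiliary steps also fail. First, $D_j$ commutes with $e^{-t\Delta}$; there is no factor $e^{-t}$ unless you remove the $j$th coordinate from the semigroup. Second, the bound $|e^{-s\Delta}\mathrm{div}\,G|\lesssim(1-e^{-s})^{-1/2}\|G\|_{L_\infty(\Om;\elln)}$ fails for $g_j=x_j/\sqrt n$, where the left side equals $e^{-s}\sqrt n$ at $\mathbf 1$.

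The missing idea is to keep the summation (the divergence) away from the BMO endpoint. The paper factors $\Delta^{-1/p}\mathrm{div}\,G=\sum_jD_j\Delta^{-1/2}(\Delta^{-a}g_j)$ with $a=1/p-1/2$. It runs Stein interpolation only for the \emph{diagonal, unsummed} family $U_zG=(D_j\Delta^{-z}g_j)_j$, between $\Re z=0$ (an $L_2$ contraction) and $\Re z=1/2$. At $\Re z=1/2$ the vector-valued endpoint $L_\infty(\Om;\elln)\to\BMO_{\partial,\elln}$ is true, from four facts:
\begin{enumerate}
\item $U_{1/2+it}G=QR(\Delta^{-it}G)$, with $Q$ the contractive diagonal map;
\item $\|RF\|_{\BMO_{\partial}}=\|F\|_{\BMO_\Gamma}$;
\item the Junge--Mei multiplier bound $e^{\pi|t|}$ for $\Delta^{-it}$ on $\BMO_\Gamma$, absorbed by the factor $e^{(w-\theta)^2}$;
\item $\|F\|_{\BMO_\Gamma}\lesssim\|F\|_\infty$.
\end{enumerate}
Junge--Mei interpolation turns this into $\|(D_j\Delta^{-a}g_j)_j\|_{L_q(\Om;\elln)}\lesssim_q\|G\|_{L_q(\Om;\elln)}$. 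The sum over $j$ is then taken only at the $L_q$ level, $q<\infty$, via the Ivanisvili--Volberg square-function inequality $\|\sum_jD_j\Delta^{-1/2}f_j\|_q\lesssim_q\|(\sum_j|D_jf_j|^2)^{1/2}\|_q$.

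Your family at $\Re z=1$ is exactly this outer summation composed with $U_{1/2+it}$. The examples above show that this composition cannot be pushed to $L_\infty\to\BMO$. Your fallback (Riesz transforms with imaginary powers, plus an extra $\Delta^{-1/2}$ via $\Gamma_2\ge\Gamma$) still targets the same summed endpoint, so it does not repair the gap; you need a separate $L_q$ square-function input for the summation.
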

	
	The key ingredient of the proof is the noncommutative semigroup BMO
	theory of Junge and Mei established in \cite{JungeMei}. This paper is organized as follows. In Section~\ref{sec:cube}, we present the necessary preliminaries on the Hamming cube, noncommutative $L_p$ spaces, quantum Markov semigroup theory and complex interpolation theory. Section~\ref{sec:semigroup} is devoted to theory of semigroup BMO spaces introduce by Junge and Mei in \cite{JungeMei}. In the last section, the concrete proof of our main theorem is given.
	
	Throughout the paper, we use the notation $A \lesssim B$ (or equivalently $ B \gtrsim A$) to indicate that $A \leq CB$ for some absolute positive constant $C$. In addition, the symbol $A \approx B$ signifies that both these inequalities and their converses hold.
	
	\bigskip
	
	\section{Preliminaries}\label{sec:cube}
	
	\subsection{Hamming cube}
	
	Fix an integer $n\ge1$, write $[n]=\{1,\ldots,n\}$, and set
	\[
	\Om=\{-1,1\}^n,
	\]
	equipped with the normalized counting measure $\mu_n$, so every point has
	mass $2^{-n}$.  Thus, for a scalar function $f:\Om\to\mathbb C$,
	\[
	\E f=\int_{\Om}f\,d\mu_n=2^{-n}\sum_{x\in\Om}f(x),
	\qquad
	\|f\|_{L_p(\Om)}=
	\left( \E |f|^p \right)^{1/p}
	\]
	when $1\le p<\infty$, and
	$\|f\|_{L_\infty(\Om)}=\max_{x\in\Om}|f(x)|$.
	
	If $H$ is a finite-dimensional complex Hilbert space, then for a vector-valued function $F : \Omega_n \to H$,
	\[
	\|F\|_{L_p(\Om;H)}=
	\left(\E\|F(x)\|_H^p\right)^{1/p},
	\qquad
	\|F\|_{L_\infty(\Om;H)}=\max_x\|F(x)\|_H.
	\]
	Throughout, scalar and Hilbert-space inner products are assumed to be linear in their
	first variable.

	For $x=(x_1,\ldots,x_n)\in\Om$, let
	\[
	x^{(j)}=(x_1,\ldots,x_{j-1},-x_j,x_{j+1},\ldots,x_n),
	\]
	and define
	\begin{equation}
		(\sigma_jf)(x)=f(x^{(j)}),
		\qquad
		D_j=\frac{I-\sigma_j}{2}.
		\label{eq:coordinate-difference}
	\end{equation}
	The map $x\mapsto x^{(j)}$ is a measure-preserving involution. Hence
	$\sigma_j$ is a self-adjoint unitary on $L_2(\Om)$. It follows
	directly from \eqref{eq:coordinate-difference} that
	\begin{equation*}
		D_j^*=D_j,
		\qquad
		D_j^2=D_j.
	\end{equation*}
	Note that all discrete partial derivatives $D_1,\ldots,D_n$ commute with each other. The positive discrete Laplacian is defned as
	\begin{equation*}
		\Delta=\sum_{j=1}^nD_j^*D_j
		=\sum_{j=1}^nD_j.
	\end{equation*}
	We set
	\begin{equation*}
		\nabla f=(D_1f,\ldots,D_nf),
		\qquad
		|\nabla f|(x)=\left(\sum_{j=1}^n|D_jf(x)|^2\right)^{1/2}.
	\end{equation*}
	
	We now introduce the Walsh basis and the fractional powers of the Laplacian.
	Let $w_\varnothing=1.$ For $\emptyset\neq A\subseteq[n]$, define the Walsh character
	\[
	w_A(x)=\prod_{j\in A}x_j, \ \forall x\in \Omega_n.
	\]
	It is easy to verify that the Walsh characters form an orthonormal basis of $L_2(\Om)$.  Every $f : \Omega_n \to \mathbb C$ has
	the unique expansion
	\begin{equation*}
		f=\sum_{A\subseteq[n]}\widehat f(A)w_A,
		\qquad
		\widehat f(A)=\langle f,w_A\rangle=\E[f w_A],
	\end{equation*}
	and Parseval's identity reads
	\begin{equation*}
		\|f\|_2^2=\sum_{A\subseteq[n]}|\widehat f(A)|^2.
	\end{equation*}
	
	Since changing the $j$-th sign multiplies $w_A$ by $-1$ precisely when
	$j\in A$, one has
	\begin{equation}
		\sigma_jw_A=(-1)^{\one_{\{j\in A\}}}w_A,
		\qquad
		D_jw_A=\one_{\{j\in A\}}w_A,
		\qquad
		\Delta w_A=|A|w_A.
		\label{eq:walsh-eigenvalues}
	\end{equation}
	Here, $|A|$ denotes the cardinality of $A$. In particular, the kernel of $\Delta$ consists exactly of the constants.
	Define
	\begin{equation}
		L_p^0(\Om;H)=\{F\in L_p(\Om;H):\E F=0\}.
		\label{eq:mean-zero-space}
	\end{equation}
	The projection onto this space is $ \Pi_0=I-\E$.
	
	We now fix the convention for all fractional and complex powers.  If
	$z\in\mathbb C$, then
	\begin{equation}
		\Delta^zf:=
		\sum_{\varnothing\ne A\subseteq[n]}
		e^{z\log|A|}\widehat f(A)w_A.
		\label{eq:fractional-powers}
	\end{equation}
	Here $\log|A|$ is the real logarithm of the positive integer $|A|$. There is no domain issue in \eqref{eq:fractional-powers} since for fixed $n$ the spectrum is the
	finite set $\{0,1,\ldots,n\}$. Thus every $\Delta^z$ annihilates constants; in
	particular $\Delta^0=\Pi_0$. On $L_p^0(\Om;H)$,
	$\Delta^0=I$, every $\Delta^z$ is invertible, and
	\begin{equation}
		\Delta^z\Delta^w=\Delta^{z+w},
		\qquad
		D_j\Delta^z=\Delta^zD_j.
		\label{eq:spectral-identities}
	\end{equation}
	In addition, $(\Delta^z)^*=\Delta^{\overline z} \ \text{on } L_2^0(\Om;H).$
	
	\subsection{Noncommutative $L_p$ spaces}
	We recall fundamental theory on noncommutative $L_p$ spaces on semifinite von Neumann algebras. Let $(\cN,\tau)$ be a semifinite von Neumann algebra with a normal semifinite faithful trace $\tau$. Let $ \mathcal S_+$ denote the set of all positive $x \in \cN$ such that $\tau({\rm supp}\, x) < \infty$, where ${\rm supp}\, x$ denotes the support of $x$. Let $\mathcal S$ be the linear span of $\mathcal S_+$. Then $\mathcal S$ is a $*$-subalgebra of $\cN$ which is weak-$*$ dense in $\cN$. Moreover, for $0 < p < \infty$ and $x \in \mathcal S$, we have $|x|^p \in \mathcal S_+$, where $|x| = (x^* x)^{ \frac{1}{2} }$, and thus $\tau (|x|^p) < \infty$. We define
	the associated
	noncommutative norms by
	\begin{equation*}
		\|x\|_{L_p(\cN)}=\tau(|x|^p)^{1/p}\quad(1\le p<\infty),
		\qquad
		\|x\|_{L_\infty(\cN)}=\|x\|_{\cN}.
	\end{equation*}
	The completion of $\mathcal S$ under $\|\cdot\|_{L_p(\cN)}$ is the noncommutative $L_p$ space associated with $(\cN, \tau)$, denoted by $L_p(\cN, \tau)$ or simply $L_p(\cN)$. We set $L_\infty(\cN) = \cN$ equipped with the operator norm. We refer the interested reader to \cite{PX2003,Xu2007book} for more information on noncommutative $L_p$ spaces.
	
	A von Neumann algebra $\cN$ is finite if the trace $\tau$ is finite for all positive elements. We normalize finite traces by $\tau(1)=1$ unless
	an unnormalized matrix trace is explicitly specified.
	
	
	Denote by
	\[
	\cN=L_\infty(\Om),
	\qquad \tau(f)=\E f.
	\]
	Let $M_m$ denote the algebra of $m\times m$ matrices. For matrix-valued functions we use
	\[
	\cN_m=M_m\,\overline\otimes\,L_\infty(\Om)
	\cong L^\infty(\Om;M_m)
	\]
	with trace $\tau_m=\Tr\otimes\E$, where $\Tr$ is the unnormalized matrix
	trace.
	
	\begin{definition}
		A linear map $S:\cN\to\cN$ is positive if $x\ge0$ implies $Sx\ge0$.
		It is completely positive if, for every $m\ge1$, the entrywise matrix
		amplification
		\[
		\id_{M_m}\otimes S:M_m\overline\otimes\cN
		\longrightarrow M_m\overline\otimes\cN
		\]
		is positive. It is $2$-positive if the amplification with $m=2$ is
		positive. It is unital if $S1=1$, and normal if it preserves
		suprema of increasing bounded nets of positive elements.
	\end{definition}
	 
	 It is clear that complete positivity implies $2$-positivity. We will use the following Kadison--Schwarz inequality.
	
	\begin{lemma}\label{ext:KS}
		If $S$ is a unital $2$-positive map between $\cN$, then
		\begin{equation}
			S(x)^*S(x)\le S(x^*x)
			\label{eq:kadison-schwarz}
		\end{equation}
		for every $x\in \cN$.  In particular, \eqref{eq:kadison-schwarz} holds for every unital completely
		positive map.
	\end{lemma}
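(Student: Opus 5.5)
The plan is the classical positivity argument via the $2\times 2$ matrix trick. Fix $x\in\cN$ and set $S$ to be a unital $2$-positive map. The key object is the block matrix
\[
X=\begin{pmatrix} 1 & x\\ 0 & 0\end{pmatrix}\in M_2\overline\otimes\cN ,
\qquad
X^*X=\begin{pmatrix} 1 & x\\ x^* & x^*x\end{pmatrix}\ge 0 .
\]
Since $S$ is $2$-positive, $(\id_{M_2}\otimes S)(X^*X)\ge 0$. We also have $S(x^*)=S(x)^*$, because positive maps are $*$-preserving: write $x$ as a combination of positive elements. Using $S1=1$, this gives
\[
\begin{pmatrix} 1 & S(x)\\ S(x)^* & S(x^*x)\end{pmatrix}\ge 0 .
\]

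The second step is the standard fact that a positive block matrix $\begin{pmatrix} 1 & b\\ b^* & c\end{pmatrix}$ with identity in the corner satisfies $c\ge b^*b$. To see this, conjugate by the invertible upper-triangular matrix
\[
T=\begin{pmatrix} 1 & -b\\ 0 & 1\end{pmatrix}.
\]
Congruence preserves positivity, and
\[
T^*\begin{pmatrix} 1 & b\\ b^* & c\end{pmatrix}T=\begin{pmatrix} 1 & 0\\ 0 & c-b^*b\end{pmatrix}.
\]
Positivity of the lower-right corner then yields $c-b^*b\ge 0$. Taking $b=S(x)$ and $c=S(x^*x)$ gives \eqref{eq:kadison-schwarz}.

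Finally, complete positivity includes the case $m=2$, so the statement for unital completely positive maps follows immediately.

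The only point requiring care is the $*$-preservation of $S$ together with the congruence computation. In the present commutative setting $\cN=L_\infty(\Om)$ this is even simpler: one could argue pointwise. However, the matrix argument works verbatim for any von Neumann algebra, so no real obstacle is expected.
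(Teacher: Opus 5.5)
Your proof is correct and is the standard one: form the positive block matrix $X^*X=\begin{pmatrix}1 & x\\ x^* & x^*x\end{pmatrix}$, apply $2$-positivity, and conjugate by $\begin{pmatrix}1 & -b\\ 0 & 1\end{pmatrix}$. The paper gives no proof of its own and simply records the lemma as the known Kadison--Schwarz inequality.

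The one implicit step, $S(x^*)=S(x)^*$, can be justified in either of two ways. First, $2$-positivity implies positivity: apply $\id_{M_2}\otimes S$ to $e_{11}\otimes a$ with $a\ge0$. Alternatively, the positive, hence self-adjoint, matrix $(\id_{M_2}\otimes S)(X^*X)$ has off-diagonal entries $S(x)$ and $S(x^*)$, so these must be adjoint to each other.

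Your general von Neumann algebra version is in fact what the paper needs. When verifying $\Gamma_2\ge0$, the paper applies the lemma to the amplifications of $T_t$ on the noncommutative algebras $\cN_m$.
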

	
	
	\subsection{Quantum Markov semigroups}
	We first introduce the notion of quantum Markov semigroups in \cite[Section~1.2]{JungeMei}.
	\begin{definition}
		Let $(\cN,\tau)$ be a semifinite von Neumann algebra.  A family
		$(T_t)_{t\ge0}$ of linear operators on $\cN$ is a quantum Markov semigroup if
		\begin{enumerate}[label=\textup{(\roman*)}]
			\item $T_0=I$ and $T_{s+t}=T_sT_t$;
			\item every $T_t$ is normal, completely positive, and unital;
			\item every $T_t$ is self-adjoint with respect to the trace:
			$\tau(T_t(x)y)=\tau(xT_t(y))$ whenever the expression is well-defined;
			\item for each $x\in\cN$, $T_t x\to x$ in the strong operator topology as $t\downarrow0$.
		\end{enumerate}
	\end{definition}

	Let
	\[
	{\rm dom(A)} = \left\{ x \in \cN : \lim_{t \to 0_+} \frac{x - T_t(x)}{t} \text{ converges in } \cN \right\}.
	\]
	Then for any $x \in {\rm dom}(A)$, the infinitesimal generator of $(T_t)$ is defined as
	\[
	A(x) = \lim_{t \to 0_+} \frac{x - T_t(x)}{t},
	\]
	where the limit is taken in the strong operator topology of $\cN$. 
	
	
	In particular, define the cube heat semigroup and its subordinated Poisson semigroup by
	\begin{equation*}
		T_t=e^{-t\Delta},
		\qquad
		P_s=e^{-s\sqrt\Delta}.
	\end{equation*}
	They act on the Walsh basis by
	\begin{equation}
		T_tw_A=e^{-t|A|}w_A,
		\qquad
		P_sw_A=e^{-s\sqrt{|A|}}w_A.
		\label{eq:semigroup-walsh}
	\end{equation}
	
	We verify that the cube heat semigroup is a quantum Markov semigroup.  Since the
	$D_j$'s commute,
	\[
	T_t=\prod_{j=1}^ne^{-tD_j}.
	\]
	The spectral projections of $\sigma_j$ are $(I+\sigma_j)/2$ and
	$(I-\sigma_j)/2=D_j$, so
	\begin{equation}
		e^{-tD_j}
		=\frac{I+\sigma_j}{2}+e^{-t}\frac{I-\sigma_j}{2}
		=\frac{1+e^{-t}}2I+\frac{1-e^{-t}}2\sigma_j.
		\label{eq:one-coordinate-heat}
	\end{equation}
	Both coefficients in \eqref{eq:one-coordinate-heat} are nonnegative and sum to one.  Each
	$\sigma_j$ is a trace-preserving $*$-automorphism, and a convex
	combination of $*$-automorphisms is normal, completely positive, unital,
	and trace preserving.  Expanding the product of \eqref{eq:one-coordinate-heat} expresses $T_t$ as
	a convex combination of the commuting $*$-automorphisms
	$\prod_{j\in B}\sigma_j$ $(B\subseteq [n])$; hence it has the same properties.  Each such
	automorphism is self-adjoint for the trace, hence so is $T_t$.  The
	semigroup law follows from the exponential formula, and the strong
	continuity is clear.  Thus the cube heat semigroup is a quantum Markov semigroup.
	
	For $s>0$, the subordination formula is
	\begin{equation*}
		P_s=\frac{s}{2\sqrt\pi}\int_0^\infty
		u^{-3/2}e^{-s^2/(4u)}T_u\,du.
	\end{equation*}
	This shows that every $P_s$ is also
	normal, completely positive, unital, trace preserving, and self-adjoint.  From \eqref{eq:semigroup-walsh}, on the mean-zero subspace,
	\begin{equation}
		\partial_sP_s=-\sqrt\Delta\,P_s.
		\label{eq:poisson-derivative}
	\end{equation}
	
	All preceding assertions remain valid on $\cN_m$ after applying the matrix
	amplification. Indeed, the amplifications of the
	$*$-automorphisms in \eqref{eq:one-coordinate-heat} are again $*$-automorphisms.
	
	\subsection{Markov dilation}
	
	Let $\cM$ be a semifinite von Neumann algebra. An increasing filtration is a family of von Neumann subalgebras
	$\cM_s\subseteq\cM_t$ for $s<t$ such that the trace restricted to each
	$\cM_s$ is semifinite and faithful and $\bigcup_{s\ge0}\cM_s$ is
	weak-operator dense in $\cM$.  A trace-preserving conditional
	expectation $E_s:\cM\to\cM_s$ is a normal positive unital idempotent that
	fixes $\cM_s$, preserves the trace, and satisifes:
	$$ E_s(axb)=aE_s(x)b,$$ 
	for $a,b\in\cM_s$ and $x \in \cM$.
	
	We introduce the notion of Markov dilation in \cite[Section~4]{JungeMei}.
	\begin{definition}
		A semigroup $(T_t)_{t\ge0}$ on $(\cN,\tau)$ admits a Markov
		dilation if there exist
		\begin{enumerate}[label=\textup{(\roman*)}]
			\item a larger semifinite von Neumann algebra $(\cM,\widetilde\tau)$;
			\item an increasing filtration $(\cM_s)_{s\ge0}$ of von Neumann
			subalgebras and the trace-preserving conditional expectation
			$E_s:\cM\to\cM_s$;
			\item trace-preserving $*$-homomorphisms
			$\pi_s:\cN\to\cM_s$;
		\end{enumerate}
		such that, for every $0\le s<t$ and $x\in\cN$,
		\begin{equation*}
			E_s(\pi_t(x))=\pi_s(T_{t-s}x).
		\end{equation*}
	\end{definition}
	
	
	\begin{lemma}\label{lem:dilation}
		The cube heat semigroup and every matrix amplification of it admit a Markov dilation.
	\end{lemma}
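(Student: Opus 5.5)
We construct the Markov dilation explicitly, coordinate by coordinate, using a Poisson process of random sign flips. By \eqref{eq:one-coordinate-heat}, the semigroup $e^{-tD_j}$ is the transition semigroup of a continuous-time Markov chain on $\{-1,1\}$ that flips the sign at rate $1/2$. Indeed, the probability of an odd number of flips of a rate-$\lambda$ Poisson process in time $t$ is $(1-e^{-2\lambda t})/2$, so we take $\lambda=1/2$. We then run $n$ independent such chains started from uniform initial conditions.

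Concretely, let $(\Sigma,\mathcal B,\mathbb P)$ support independent Poisson processes $N^{(1)},\dots,N^{(n)}$ of rate $1/2$. Set
\[
\cM=L_\infty(\Om)\,\overline\otimes\,L_\infty(\Sigma,\mathbb P),\qquad \widetilde\tau=\E\otimes\mathbb E_{\mathbb P},
\]
so that $\cM$ is a commutative finite von Neumann algebra. Define random points $X_t=(x_j(-1)^{N^{(j)}_t})_j$ with $x\in\Om$ the initial point, and put $\pi_t(f)=f(X_t)$. Let $\cM_s$ be the von Neumann subalgebra generated by $\{\pi_r(f): r\le s,\ f\in\cN\}$; equivalently, it is generated by $L_\infty(\Om)$ and the increments of the $N^{(j)}$ up to time $s$. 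We take $E_s$ to be the classical conditional expectation onto this $\sigma$-algebra.

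We then verify the required properties one by one.
\begin{enumerate}[label=\textup{(\roman*)}]
\item Each $\pi_t$ is a $*$-homomorphism, since composition with a map is multiplicative. It is trace preserving because $X_t$ is uniformly distributed on $\Om$: the initial point is uniform and the flips are independent of it.
\item The family $(\cM_s)$ is increasing, and $\widetilde\tau$ restricted to each $\cM_s$ is finite and faithful. If density of $\bigcup_s\cM_s$ is demanded, we replace $\cM$ by the weak closure of $\bigcup_s\cM_s$, which changes nothing else.
\item For $s<t$, $X_t=X_s\odot\varepsilon$, where the $j$-th coordinate of $\varepsilon$ is $\varepsilon_j=(-1)^{N^{(j)}_t-N^{(j)}_s}$. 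By independent increments, $\varepsilon$ is independent of $\cM_s$, with independent coordinates each equal to $-1$ with probability $(1-e^{-(t-s)})/2$. Hence
\[
E_s(f(X_t))=\mathbb E_\varepsilon f(X_s\odot\varepsilon)=\Big(\prod_j\big(\tfrac{1+e^{-(t-s)}}{2}+\tfrac{1-e^{-(t-s)}}{2}\sigma_j\big)f\Big)(X_s)=\pi_s(T_{t-s}f),
\]
which is exactly \eqref{eq:one-coordinate-heat}.
\end{enumerate}

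For the matrix amplification, we take $M_m\overline\otimes\cM$, $\id\otimes\pi_t$, and $\id\otimes E_s$, with filtration $M_m\overline\otimes\cM_s$ and trace $\Tr\otimes\widetilde\tau$. Tensoring with $\id_{M_m}$ preserves the homomorphism, conditional expectation and trace-preservation properties, since $\id\otimes E_s$ is a conditional expectation onto $M_m\overline\otimes\cM_s$. The identity $E_s\pi_t=\pi_sT_{t-s}$ then holds entrywise.

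The only delicate point is matching the flip rate to the normalization $D_j=(I-\sigma_j)/2$, which gives rate $1/2$; the rest is routine. Alternatively, we could cite the general result that semigroups of this kind, which are convex combinations of commuting automorphisms, always admit standard dilations (Junge--Mei, Section 4). The explicit construction above avoids needing that result.
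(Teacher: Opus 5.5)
Your proof is correct, but it takes a different route from the paper. The paper proves the lemma in one line by citing the general theorem of Junge--Ricard--Shlyakhtenko \cite{JRSh}: every quantum Markov semigroup on a finite von Neumann algebra admits a Markov dilation, and this applies equally to each amplification on $\cN_m$. You instead build an explicit commutative dilation, namely the continuous-time random walk on $\Om$ started at a uniform point, in which each coordinate flips at the jumps of an independent rate-$1/2$ Poisson process. Your checks go through:
\begin{enumerate}[label=\textup{(\roman*)}]
\item The parity computation $e^{-\lambda t}\sinh(\lambda t)=(1-e^{-2\lambda t})/2$ with $\lambda=1/2$ reproduces the coefficients in \eqref{eq:one-coordinate-heat}.
\item Uniformity of $X_t$ gives trace preservation of $\pi_t$.
\item The future increments $N_t-N_s$ are independent of $\sigma(x)\vee\sigma(N_r:r\le s)$, which contains $\cM_s$. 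Together with the freezing lemma, this gives $E_s\pi_t=\pi_sT_{t-s}$.
\item Passing to the weak closure of $\bigcup_s\cM_s$ meets the density requirement in the paper's definition of a filtration.
\item Tensoring with $M_m$, with trace $\Tr\otimes\widetilde\tau$, handles the amplifications entrywise.
\end{enumerate}
Your approach gives a self-contained, elementary argument: the dilation is a concrete stationary, time-reversible classical Markov process, and nothing rests on a substantial external theorem. The paper's citation is shorter and more general, since it needs no structure specific to the cube. Note that the general result the paper actually relies on is the one in \cite{JRSh}, not a result from Junge--Mei.
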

	
	\begin{proof}
	This can be immediately obtained from the fact that every quantum Markov semigroup on a finite von Neumann algebra admits a Markov dilation, see \cite{JRSh}.
	\end{proof}
	
	\subsection{Carr\'e du champ and the condition $\Gamma_2\ge0$}
	We now introduce the carr\'{e} du champ in the sense of Meyer and the condition $\Gamma_2\ge0$.
	\begin{definition}
		Let $A$ be the positive generator of a quantum Markov semigroup $(T_t)_{t\ge0}$.  On a common
		$*$-algebra contained in the domain of $A$, define the carr\'{e} du champ by
		\begin{equation}
			\Gamma_A(x,y)= \frac{1}{2} ( A(x^*)y+x^*A(y)-A(x^*y) ).
			\label{eq:gradient-form}
		\end{equation}
		The semigroup satisfies $\Gamma_2\ge0$ if
		\begin{equation}
			\Gamma_A(T_tx,T_tx)\le T_t\Gamma_A(x,x)
			\label{eq:gamma2}
		\end{equation}
		for every $t>0$ and every $x$ in that algebra.
	\end{definition}
	

	On $\Omega_n$, every function is in the generator domain.  By using
	$\sigma_i(x^*y)=\sigma_i(x)^*\sigma_i(y)$, we calculate
	\begin{align*}
		& \ \ \ \ D_i(x^*)y+x^*D_i(y)-D_i(x^*y)\\
		&=\frac12\bigl[(x^*-\sigma_i(x)^*)y
		+x^*(y-\sigma_i(y))-x^*y+\sigma_i(x)^*\sigma_i(y)\bigr]\\
		&=\frac12\bigl(x^*-\sigma_i(x)^*\bigr)
		\bigl(y-\sigma_i(y)\bigr)\\
		&=2(D_i x)^*D_i y.
	\end{align*}
	Summing over $i$ and comparing with \eqref{eq:gradient-form} yields
	\begin{equation}
		\Gamma_\Delta(x,y)=\sum_{i=1}^n(D_i x)^*D_i y.
		\label{eq:cube-gamma}
	\end{equation}
	This computation remains valid for matrix-valued funcrions $x,y$.
	
	We can now prove the $\Gamma_2 \geq 0$ condition directly for any matrix
	amplification of the cube heat semigroup. Since $T_t$ and all its matrix
	amplifications are unital and completely positive,
	commutation of $T_t$ with $D_i$, followed by Lemma~\ref{ext:KS}, gives
	\begin{equation*}
		\begin{aligned}
			\Gamma_\Delta(T_tx,T_tx)
			&=\sum_i(T_tD_i x)^*(T_tD_i x)\\
			&\le\sum_iT_t\bigl((D_i x)^*D_i x\bigr)\\
			&=T_t\Gamma_\Delta(x,x).
		\end{aligned}
	\end{equation*}
	Thus \eqref{eq:gamma2} holds
	uniformly for scalar and matrix-valued functions.

	\subsection{Complex interpolation}
	We recall some basic theorems about complex interpolation of Banach spaces. 
	\begin{definition}
		A compatible Banach couple $(X_0,X_1)$ consists of two Banach spaces
		continuously embedded in a common Hausdorff topological vector space.
		Let $\mathbb S=\{z\in\mathbb C:0\le\Re z\le1\}$.  The Calder\'on class
		$\cF(X_0,X_1)$ consists of bounded continuous functions
		$F:\mathbb S\to X_0+X_1$ that are analytic in the open strip, satisfy
		$F(it)\in X_0$, $F(1+it)\in X_1$, and
		\[
		\|F\|_{\cF}
		=\max\left\{\sup_t\|F(it)\|_{X_0},\
		\sup_t\|F(1+it)\|_{X_1}\right\}<\infty.
		\]
		In the standard Calder\'on class one also requires $\|F(j+it)\|_{X_j}\to0$ as $|t|\to\infty$ for $j=0,1$. 
		For $0<\theta<1$,
		\[
		[X_0,X_1]_\theta
		=\{F(\theta):F\in\cF(X_0,X_1)\},
		\]
		with norm
		$\|x\|_\theta=\inf\{\|F\|_{\cF}:F(\theta)=x\}$.
	\end{definition}
	
	This is the Calder\'on complex method; see Bergh--L\"ofstr\"om
	\cite[Section 4.1]{BerghLofstrom}.  We state all consequences which will be needed later.
	
	\begin{theorem}\label{ext:basicinterpolation}
		Let $0<\theta<1$.
		\begin{enumerate}[label=\textup{(\roman*)}]
			\item If $S:X_j\to Y_j$ has norm at most $M_j$ for $j=0,1$, then
			\[
			S:[X_0,X_1]_\theta\to[Y_0,Y_1]_\theta,
			\qquad \|S\|\le M_0^{1-\theta}M_1^\theta.
			\]
			\item $[X_0,X_1]_\theta=[X_1,X_0]_{1-\theta}$ isometrically.
			\item If $\mathsf P$ is a bounded projection on $X_0$ and $X_1$, then
			\[
			[\mathsf PX_0,\mathsf PX_1]_\theta
			=\mathsf P[X_0,X_1]_\theta
			\]
			with equivalence constants controlled by its endpoint norms.
			\item If $(\Xi,\nu)$ is a finite measure space, $H$ is finite
			dimensional, $1\le p_0,p_1\le\infty$, and
			\[
			\frac1p=\frac{1-\theta}{p_0}+\frac\theta{p_1},
			\]
			then
			\[
			[L_{p_0}(\Xi;H),L_{p_1}(\Xi;H)]_\theta=L_p(\Xi;H)
			\]
			with the canonical interpolation norm.
		\end{enumerate}
	\end{theorem}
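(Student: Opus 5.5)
These are standard facts about the Calder\'on method, and I would derive each directly from the definition of $\cF(X_0,X_1)$, relying on \cite[Section 4.1]{BerghLofstrom} for routine details. Throughout I use the three lines principle: if $G$ is bounded, continuous on $\mathbb S$ and analytic inside, then $\log|G(\theta)|$ is at most the harmonic-measure average of its boundary values. This gives $|G(\theta)|\le (\sup_t|G(it)|)^{1-\theta}(\sup_t|G(1+it)|)^{\theta}$. I will freely use the harmless normalization of multiplying $F$ by $e^{\delta(z^2-\theta^2)}$, which enforces decay at $|t|\to\infty$. This costs a factor tending to $1$ as $\delta\to0$, so the decay requirement in the standard class does not affect any norm.

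For (i), I will take $x=F(\theta)$ with $F\in\cF(X_0,X_1)$ and set $G(z)=M_0^{z-1}M_1^{-z}\,SF(z)$. Since $S$ is bounded $X_0+X_1\to Y_0+Y_1$, the function $G$ is analytic and continuous. On the boundary line $\Re z=j$ we have $|M_0^{z-1}M_1^{-z}|=M_j^{-1}$, so $\|G\|_{\cF(Y_0,Y_1)}\le\|F\|_{\cF}$. Hence $\|Sx\|_\theta\le M_0^{1-\theta}M_1^\theta\|F\|_\cF$, and taking the infimum over $F$ finishes (i).

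For (ii), the reflection $F\mapsto F(1-z)$ is an isometric bijection between $\cF(X_0,X_1)$ and $\cF(X_1,X_0)$, and it carries evaluation at $\theta$ to evaluation at $1-\theta$.

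For (iii), I will treat $\mathsf P$ as a retraction with coretraction given by the inclusion $\iota:\mathsf PX_j\to X_j$. By (i), $\iota$ maps $[\mathsf PX_0,\mathsf PX_1]_\theta\to[X_0,X_1]_\theta$ with norm at most $1$, and $\mathsf P$ maps $[X_0,X_1]_\theta\to[\mathsf PX_0,\mathsf PX_1]_\theta$ with norm at most $\|\mathsf P\|_{X_0}^{1-\theta}\|\mathsf P\|_{X_1}^\theta$. Since $\mathsf P\iota=\id$, the two spaces coincide as sets, and the norms are equivalent with the stated constants.

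For (iv), I expect the only real work, though it is the classical Riesz--Thorin argument; finite dimensionality of $H$ and finiteness of $\nu$ remove all measurability subtleties.
\begin{enumerate}[label=\textup{(\alph*)}]
\item \emph{Inclusion $L_p\subseteq[\,\cdot\,]_\theta$.} Given $f\in L_p(\Xi;H)$ with $\|f\|_p=1$, I write $f=|f|u$ with $\|u\|_H=1$ on the support of $f$. I then set
\[
F(z)=|f|^{p\left(\frac{1-z}{p_0}+\frac z{p_1}\right)}u .
\]
A direct computation gives $\|F(j+it)\|_{p_j}=1$ for $j=0,1$ and $F(\theta)=f$, so $\|f\|_\theta\le1$. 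The case $p_j=\infty$ is handled the same way, with $p/p_j=0$.
\item \emph{Reverse inequality.} This is the main obstacle, and I would handle it by duality. I pair $F(z)$ with an analogous dual-exponent analytic function $G(z)$ built from a norming $g\in L_{p'}(\Xi;H)$, chosen so that $\langle F(\theta),G(\theta)\rangle=\langle f,g\rangle$. Next I check that $z\mapsto\langle F(z),G(z)\rangle$ is bounded and analytic and is bounded by $\|F\|_\cF$ on both boundary lines, by H\"older's inequality. The three lines principle then gives $|\langle f,g\rangle|\le\|F\|_\cF$. Taking the supremum over $g$ and the infimum over $F$ yields $\|f\|_p\le\|f\|_\theta$.
\end{enumerate}
Combining (a) and (b) gives equality of the spaces with the canonical norm.
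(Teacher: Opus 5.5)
Your proposal is correct and follows the same route as the paper, which gives no proof of its own and defers to Bergh--L\"ofstr\"om. Your arguments are the standard ones found there: exactness via the factor $M_0^{z-1}M_1^{-z}$, the reflection $z\mapsto 1-z$, the retraction/coretraction pair $(\mathsf P,\iota)$, and the Riesz--Thorin construction combined with duality for $L_p(\Xi;H)$.

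Two routine details in (iv)(b) need tightening. Under the paper's convention that inner products are linear in the first variable, the dual function must be anti-analytic (or you must use a bilinear pairing) for $z\mapsto\langle F(z),G(z)\rangle$ to be analytic. You should also take $g$ simple, so that the pairing is a bounded functional on $L_{p_0}+L_{p_1}$.
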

	
	Parts (i)--(iii) are the exactness, symmetry, and retract properties in
	Bergh--L\"ofstr\"om \cite[Chapter~4]{BerghLofstrom}; part (iv) is the $L_p$ interpolation theorem in \cite[Chapter~5]{BerghLofstrom}.
	
	\medskip
	
	The following is Stein's interpolation theorem \cite{SteinInterpolation}.
	
	\begin{theorem}\label{ext:Stein}
		Let $(X_0,X_1)$ and $(Y_0,Y_1)$ be compatible Banach couples.  Suppose
		$S_z:X_0+X_1\to Y_0+Y_1$, $0\le\Re z\le1$, is a family of linear maps
		that is norm-continuous on the closed strip, analytic in the open strip,
		and bounded there in the sum-space operator norm:
		\[
		\sup_{0\le\Re z\le1}
		\|S_z\|_{X_0+X_1\to Y_0+Y_1}<\infty.
		\]
		Assume also that
		\[
		\sup_{t\in\mathbb R}\|S_{it}\|_{X_0\to Y_0}\le M_0,
		\qquad
		\sup_{t\in\mathbb R}\|S_{1+it}\|_{X_1\to Y_1}\le M_1.
		\]
		Then
		\begin{equation*}
			S_\theta:[X_0,X_1]_\theta\longrightarrow[Y_0,Y_1]_\theta,
			\qquad
			\|S_\theta\|\le M_0^{1-\theta}M_1^\theta.
		\end{equation*}
	\end{theorem}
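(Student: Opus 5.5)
The plan is the classical Calder\'on-class argument, in which the analytic family of operators is absorbed into the analytic function realizing the interpolated element. By homogeneity, fix $x\in[X_0,X_1]_\theta$ with $\|x\|_\theta<1$. Choose $F\in\cF(X_0,X_1)$ with $F(\theta)=x$ and $\|F\|_{\cF}<1$. We may assume $M_0,M_1>0$; otherwise replace them by $M_j+\varepsilon$ and let $\varepsilon\downarrow0$ at the end. For $\delta>0$ define
\[
G_\delta(z)=e^{\delta(z^2-\theta^2)}\,M_0^{z-1}M_1^{-z}\,S_z\bigl(F(z)\bigr),\qquad z\in\mathbb S .
\]
The Gaussian factor makes $G_\delta$ decay as $|\Im z|\to\infty$, since $|e^{\delta z^2}|=e^{\delta((\Re z)^2-(\Im z)^2)}$. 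This covers the decay requirement of the standard Calder\'on class, and the factor equals $1$ at $z=\theta$. The scalar factor $M_0^{z-1}M_1^{-z}$ has modulus $M_0^{-1}$ on $\Re z=0$ and $M_1^{-1}$ on $\Re z=1$, and equals $M_0^{\theta-1}M_1^{-\theta}$ at $z=\theta$.

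First I will check that $G_\delta\in\cF(Y_0,Y_1)$. \emph{Boundedness}: $\|S_zF(z)\|_{Y_0+Y_1}\le\sup_z\|S_z\|_{X_0+X_1\to Y_0+Y_1}\sup_z\|F(z)\|_{X_0+X_1}<\infty$. \emph{Continuity on the closed strip}: use the splitting
\[
S_zF(z)-S_wF(w)=(S_z-S_w)F(z)+S_w\bigl(F(z)-F(w)\bigr).
\]
The first term is small by norm-continuity of $z\mapsto S_z$ together with boundedness of $F$. The second is small by the uniform bound on $S_w$ together with continuity of $F$ in $X_0+X_1$. \emph{Analyticity in the open strip}: apply the same splitting to the difference quotient. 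Write
\[
\frac{S_zF(z)-S_wF(w)}{z-w}=\frac{S_z-S_w}{z-w}F(z)+S_w\frac{F(z)-F(w)}{z-w}.
\]
Both pieces converge as $z\to w$, because $z\mapsto S_z$ is analytic in operator norm and $F$ is analytic in $X_0+X_1$. So $G_\delta$ is a product-rule derivative of analytic objects, hence analytic. \emph{Boundary values}: on $\Re z=0$ we have $F(it)\in X_0$, so $S_{it}F(it)\in Y_0$. This gives
\[
\|G_\delta(it)\|_{Y_0}\le e^{-\delta(t^2+\theta^2)}M_0^{-1}M_0\|F(it)\|_{X_0}<1 .
\]
Similarly $\|G_\delta(1+it)\|_{Y_1}\le e^{\delta(1-\theta^2)}$, and both boundary norms tend to $0$ as $|t|\to\infty$.

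Hence $\|G_\delta(\theta)\|_{[Y_0,Y_1]_\theta}\le e^{\delta(1-\theta^2)}$. Since $G_\delta(\theta)=M_0^{\theta-1}M_1^{-\theta}S_\theta x$, we get $\|S_\theta x\|_\theta\le e^{\delta}M_0^{1-\theta}M_1^\theta$. Letting $\delta\downarrow0$ and then taking the infimum over admissible $F$ gives the claim.

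The only delicate step is the analyticity and boundary-continuity of $z\mapsto S_zF(z)$ as a $Y_0+Y_1$-valued function. The difficulty is that the hypotheses give regularity of $S_z$ only in the sum-space operator norm, while the boundary estimates are in $Y_j$. This is resolved because membership in $\cF$ requires continuity and analyticity only in $Y_0+Y_1$, with the $Y_j$ information needed only pointwise on the boundary lines. If the non-decaying variant of the Calder\'on class is used instead, the Gaussian factor is unnecessary; this is harmless since the two variants give the same space with equal norms (Bergh--L\"ofstr\"om).
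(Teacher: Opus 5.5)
Your argument is correct. The paper gives no proof of this statement; it is quoted from Stein's original paper, and Stein's proof takes a genuinely different route from yours. Stein works with $L_p$ spaces. He pairs $S_z(f_z)$ against $g_z$ for explicit analytic families of simple functions, and applies a three-lines lemma for functions of admissible growth to the scalar function $z\mapsto\int S_z(f_z)\,g_z$. That route tolerates boundary constants growing like $\exp(Ae^{a|t|})$ with $a<\pi$, but it depends on the concrete duality structure of $L_p$. Your route absorbs $S_z$ and the factor $M_0^{z-1}M_1^{-z}$ directly into a Calder\'on function, so it works for arbitrary compatible couples. This is what the paper actually needs, since the theorem is applied with the target couple $(L_2^0(\Om;H),\BMO_{\partial,H})$, which is not a couple of $L_p$ spaces. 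The price is that the boundary bounds must be uniform, and the paper arranges this beforehand via the factor $e^{(w-\theta)^2}$. Two small points about your write-up. First, your Gaussian $e^{\delta(z^2-\theta^2)}$ is superfluous: for $G(z)=M_0^{z-1}M_1^{-z}S_zF(z)$ one already has $\|G(j+it)\|_{Y_j}\le M_j^{-1}\|S_{j+it}F(j+it)\|_{Y_j}\le\|F(j+it)\|_{X_j}$, so any decay of $F$ on the boundary lines passes to $G$. Second, as you observe, your verification relies on the paper's class $\cF$ requiring continuity only into $Y_0+Y_1$. The Bergh--L\"ofstr\"om definition also demands that $t\mapsto G(j+it)$ be continuous into $Y_j$, and the stated hypotheses (regularity of $S_z$ only in the sum-space operator norm) would not supply this without an extra lemma. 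The issue is irrelevant in the paper's finite-dimensional application.
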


	\bigskip
	
	\section{Semigroup BMO and interpolation on the Hamming cube}\label{sec:semigroup}
	
	This section is devoted to operator-algebraic notion needed to invoke the noncommutative semigroup BMO theory established in \cite{JungeMei}. Then we will use such noncommutative semigroup theory to show Lemma \ref{lem:BMOtools}.
	
	\subsection{Semigroup BMO seminorms}\label{sec:BMO}
	
	Let $(T_t)_{t\geq 0}$ be a quantum Markov semigroup on $(\cN,\tau)$ with the positive generator $A$, let $P_t=e^{-t\sqrt A}$, and let $x\in\cN\cup L_2(\cN)$.  The column BMO seminorms associated with a semigroup $S=(S_t)_{t\geq 0}$ are
	\begin{align}
		& \|x\|_{\bmo_c(S)}
		=\sup_{t>0}\|S_t(x^*x)-S_t(x)^*S_t(x)\|_\infty^{1/2},
		\label{eq:bmo-small}\\
		& \|x\|_{\BMO_c(S)}
		=\sup_{t>0}\|S_t(|x-S_tx|^2)\|_\infty^{1/2}.
		\label{eq:bmo-big}
	\end{align}
	We use \eqref{eq:bmo-small}--\eqref{eq:bmo-big} with $S=P$ and mention the versions with $S=T$
	when stating the interpolation theorem.
	
	The differential column seminorms are
	\begin{align}
		\|x\|_{\BMO_c(\partial)}^2
		&=\sup_{t>0}\left\|P_t\int_0^t
		|\partial_sP_sx|^2s\,ds\right\|_\infty,
		\label{eq:bmo-partial}\\
		\|x\|_{\BMO_c(\Gamma)}^2
		&=\sup_{t>0}\left\|P_t\int_0^t
		\Gamma_A(P_sx,P_sx)s\,ds\right\|_\infty.
		\label{eq:bmo-gamma}
	\end{align}
	Define the space-time carr\'e du champ by
	\begin{equation}
		\widehat\Gamma(P_sx,P_sx)
		=\Gamma_A(P_sx,P_sx)+|\partial_sP_sx|^2
		\label{eq:space-time-gamma}
	\end{equation}
	and
	\begin{equation}
		\|x\|_{\BMO_c(\widehat\Gamma)}^2
		=\sup_{t>0}\left\|P_t\int_0^t
		\widehat\Gamma(P_sx,P_sx)s\,ds\right\|_\infty.
		\label{eq:bmo-hatgamma}
	\end{equation}
	
	For any column seminorm $X_c$, define the row seminorm and full seminorm
	by
	\begin{equation*}
		\|x\|_{X_r}=\|x^*\|_{X_c},
		\qquad
		\|x\|_X=\max\{\|x\|_{X_c},\|x\|_{X_r}\}.
	\end{equation*}
	Thus $\BMO(P)$, $\BMO(\partial)$, and
	$\BMO(\widehat\Gamma)$ below always mean full row and column BMO.
	
	The fixed-point algebra is
	\[
	\Fix(T)=\{x:T_tx=x\text{ for every }t\ge0\}.
	\]
	For the cube heat semigroup, \eqref{eq:semigroup-walsh} gives
	\[
	\lim_{t\to\infty}T_tf=\widehat f(\varnothing)w_\varnothing=\E f,
	\]
	so $\Fix(T)$ consists exactly of the constants and its complement is
	$L_p^0(\Om)$ from \eqref{eq:mean-zero-space}.  All seminorms \eqref{eq:bmo-small}--\eqref{eq:bmo-hatgamma} vanish on constants.
	Conversely, on the cube the differential seminorms vanish only on
	constants.  Indeed, if \eqref{eq:bmo-partial} is zero, then for every $t>0$ the positive
	function
	\[
	P_t\int_0^t|\partial_sP_sf|^2s\,ds
	\]
	is zero.  Note that $P_t$ on the finite function space is invertible since its eigenvalues in \eqref{eq:semigroup-walsh} are strictly positive.  Hence the
	integral is zero.  Its continuous nonnegative integrand is zero for
	$0<s<t$, so $\partial_sP_sf=0$; \eqref{eq:poisson-derivative} and \eqref{eq:semigroup-walsh} imply that $f$ is constant. The same argument, using \eqref{eq:cube-gamma}, shows
	that \eqref{eq:bmo-gamma} vanishes only on constants.  Thus these seminorms are norms on
	their unique mean-zero representatives.
	
	\subsection{Hilbert-valued versions and their matrix realization}
	
	For a finite-dimensional Hilbert space $H$ and $F:\Om\to H$, define
	\begin{align}
		\|F\|_{\BMO_{\partial,H}}^2
		&=\sup_{t>0}\left\|P_t\int_0^t
		\|\partial_sP_sF\|_H^2s\,ds\right\|_\infty,
		\label{eq:hilbert-bmo-partial}\\
		\|F\|_{\BMO_{\Gamma,H}}^2
		&=\sup_{t>0}\left\|P_t\int_0^t
		\sum_{i=1}^n\|D_iP_sF\|_H^2s\,ds\right\|_\infty.
		\label{eq:hilbert-bmo-gamma}
	\end{align}
	
	Write $H=\ell_2^m$, $F=(F_1,\ldots,F_m):\Om\to H$ and embed it as the first matrix column via the following map:
	\begin{equation}
		JF=\sum_{k=1}^me_{k1}\otimes F_k\in\cN_m.
		\label{eq:first-column-embedding}
	\end{equation}
	Thus one has
	\[
	(JF)^*(JF)=e_{11}\otimes\sum_{k=1}^m|F_k|^2.
	\]
	This gives
	\[
	((JF)^*(JF))^{p/2}
	=e_{11}\otimes\left(\sum_{k=1}^m|F_k|^2\right)^{p/2}.
	\]
	Therefore, for $1\le p<\infty$,
	\begin{align*}
		\|JF\|_{L_p(\cN_m)}^p
		&=\E\Tr\bigl(((JF)^*(JF))^{p/2}\bigr)\\
		&=\E\left(\sum_{k=1}^m|F_k|^2\right)^{p/2}
		=\|F\|_{L_p(\Om;H)}^p,
	\end{align*}
	while for $p=\infty$, this equality follows from the operator norm of a column.
	Thus $J$ is an isometry for every $p$.
	
	Because $P_s$, $\partial_sP_s$, and $D_i$ act on the cube variable only, one has
	\[
	|\partial_sP_s(JF)|^2
	=e_{11}\otimes\|\partial_sP_sF\|_H^2,
	\]
	and \eqref{eq:cube-gamma} gives
	\[
	\Gamma_\Delta(P_sJF,P_sJF)
	=e_{11}\otimes\sum_i\|D_iP_sF\|_H^2.
	\]
	A direct calculation after taking operator norms shows that 
	\[
	\| JF \|_{BMO_c(\partial)} = \| F \|_{BMO_{\partial, H}}, \qquad \| JF \|_{BMO_c(\Gamma)} = \| F \|_{BMO_{\Gamma, H}}.
	\]
	
	\begin{lemma}
		Let $H=\ell_2^m$ and $F=(F_1,\ldots,F_m):\Om\to H$. Then
		$$ \| JF \|_{BMO(\partial)}=\| JF \|_{BMO_c(\partial)} , \qquad \| JF \|_{BMO(\Gamma)}=\| JF \|_{BMO_c(\Gamma)}.   $$
		In particular, $J$ is isometric from $\BMO_{\partial, H}$ to $\BMO(\partial)$.
		\end{lemma}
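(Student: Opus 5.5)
Since the full seminorm is the maximum of the column seminorm and the row seminorm, and the row seminorm of \(JF\) is the column seminorm of \((JF)^*\), it suffices to prove
\[
\|(JF)^*\|_{\BMO_c(\partial)}\le\|JF\|_{\BMO_c(\partial)},
\qquad
\|(JF)^*\|_{\BMO_c(\Gamma)}\le\|JF\|_{\BMO_c(\Gamma)}.
\]
Then the maximum equals the column term. The isometry claim follows by combining this with the identity \(\|JF\|_{\BMO_c(\partial)}=\|F\|_{\BMO_{\partial,H}}\) established just before the statement.

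First I compute the integrands for \((JF)^*=\sum_k e_{1k}\otimes \overline{F_k}\). The maps \(P_s\), \(\partial_s P_s\) and \(D_i\) act only on the cube variable. They also commute with taking adjoints, since they have real kernels: \(P_s\) is a convex combination of the automorphisms \(\prod_{j\in B}\sigma_j\), and \(D_i=(I-\sigma_i)/2\). Hence
\[
\partial_sP_s(JF)^*=\sum_k e_{1k}\otimes \overline{\partial_sP_sF_k}
\]
is again a row. For a row \(R=\sum_k e_{1k}\otimes g_k\) one has
\[
R^*R=\sum_{k,l}e_{kl}\otimes \overline{g_k}g_l,
\]
which is the \(m\times m\) matrix-valued function \(G^*G\), pointwise of rank one, with \(G(x)=(g_1(x),\dots,g_m(x))\) viewed as a \(1\times m\) matrix. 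Likewise, by \eqref{eq:cube-gamma},
\[
\Gamma_\Delta(P_s(JF)^*,P_s(JF)^*)=\sum_i \bigl(D_iP_s(JF)^*\bigr)^*\bigl(D_iP_s(JF)^*\bigr),
\]
which is a sum of such rank-one positive matrices.

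Next I bound the operator norm of the resulting positive element. Set
\[
X=P_t\int_0^t R_s^*R_s\,s\,ds,
\]
where \(R_s=\partial_sP_s(JF)^*\); the \(\Gamma\) case is identical, with an extra sum over \(i\). Since \(X\) is positive in \(\cN_m\cong L_\infty(\Om;M_m)\),
\[
\|X\|_\infty=\max_x\|X(x)\|_{M_m}\le\max_x \Tr X(x).
\]
The maps \(P_t\) and \(\int\cdot\,s\,ds\) act only on the cube variable and are positive, so they commute with \(\Tr\otimes\id\). Moreover \(\Tr(R_s^*R_s)=\sum_k|\partial_sP_sF_k|^2=\|\partial_sP_sF\|_H^2\). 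Therefore
\[
\Tr X(x)=\Bigl(P_t\int_0^t\|\partial_sP_sF\|_H^2\,s\,ds\Bigr)(x).
\]
The supremum over \(x\) and \(t\) of this quantity is exactly \(\|JF\|^2_{\BMO_c(\partial)}\), by the column computation \(|\partial_sP_s(JF)|^2=e_{11}\otimes\|\partial_sP_sF\|_H^2\). This gives the row bound, and the \(\Gamma\) case runs the same way.

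The only point needing care is that \(P_t\) commutes with entrywise conjugation and with the matrix trace, which holds because the \(P_t\) are amplifications of real positive maps on the commutative factor. The rest is bookkeeping; I expect no genuine obstacle, just the estimate \(\|\cdot\|\le\Tr\) for positive matrices.
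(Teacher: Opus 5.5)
Your proposal is correct and is essentially the paper's argument: you reduce to the row bound $\|(JF)^*\|_{\BMO_c(\partial)}\le\|JF\|_{\BMO_c(\partial)}$, dominate the operator norm of a positive matrix-valued function by its trace, and use $\Tr\bigl(|\partial_sP_s(JF)^*|^2\bigr)=\|\partial_sP_sF\|_H^2$. The only cosmetic difference is where the trace bound enters: the paper bounds the integrand by its trace inside $P_t\int_0^t\cdots$ and then uses positivity, whereas you bound the final element $X$ by its trace and commute $\Tr$ past $P_t$ by linearity.
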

		\begin{proof}
			We only show $ \| JF \|_{BMO(\partial)}=\| JF \|_{BMO_c(\partial)}$ as the other equality can be obtained in the same way. It suffices to show
			$$ \| (JF)^* \|_{BMO_c(\partial)} \leq \| JF \|_{BMO_c(\partial)}= \| F \|_{BMO_{\partial, H}}. $$
			We need to use the following elementary inequality:
			$$ |A|\leq \|A\|_{M_m}\le \Tr(|A|), \quad \forall A\in M_m.  $$
			Therefore, one has
				\begin{align*}
					\| (JF)^* \|_{BMO_c(\partial)}^2&=\sup_{t>0}\left\|P_t\int_0^t
					|\partial_sP_s (JF)^*|^2s\,ds\right\|_\infty\\
					&\leq \sup_{t>0}\left\|P_t\int_0^t
					\text{Tr}\left(|\partial_sP_s (JF)^*|^2\right)s\,ds\right\|_\infty\\
					&=\sup_{t>0}\left\|P_t\int_0^t
					\|\partial_sP_sF\|_H^2s\,ds\right\|_\infty\\
					&=\|F\|_{\BMO_{\partial,H}}^2,
					\end{align*}
					which implies the desired result.
			\end{proof}
	
	By \cite[Theorem~2.9]{JungeMei} and \cite[Lemma~3.2]{JungeMei}, we have the following equivalence of BMO norms and multiplier theory under the $\Gamma_2\ge0$ condition.
	\begin{lemma}\label{ext:comparison}
		Let $(T_t)_{t\geq 0}$ be a quantum markov semigroup on $(\cN,\tau)$ and $(P_t)_{t\geq 0}$ be its subordinated
		Poisson semigroup, and assume $\Gamma_2\ge0$. Then, on $\cN\cup L^2(\cN)$, the three column seminorms
		\[
		\BMO_c(P),\qquad \bmo_c(P),\qquad
		\BMO_c(\widehat\Gamma)
		\]
		are equivalent with absolute constants.
	\end{lemma}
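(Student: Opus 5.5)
The plan is to prove the three comparisons column-wise, using only the quantum Markov structure of $(P_t)$, the Kadison--Schwarz inequality (Lemma~\ref{ext:KS}) and the hypothesis $\Gamma_2\ge0$. This follows the strategy of \cite[Theorem~2.9 and Lemma~3.2]{JungeMei}. Throughout, fix $t>0$ and write $\mathcal Q_t(z)=P_t(z^*z)-P_t(z)^*P_t(z)$. Since $P_t$ is unital and completely positive, $(z,w)\mapsto P_t(z^*w)-P_t(z)^*P_t(w)$ is a positive semidefinite $\cN$-valued sesquilinear form, and $\mathcal Q_t(z)\ge0$.

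\emph{Step 1: $\BMO_c(P)\lesssim\bmo_c(P)$.} Write $x-P_tx=x-y$ with $y=P_tx$. Expanding the square gives
\[
P_t(|x-y|^2)=\mathcal Q_t(x-y)+|P_tx-P_{2t}x|^2 .
\]
By the operator Cauchy--Schwarz property of the positive form, $\mathcal Q_t(x-y)\le 2\mathcal Q_t(x)+2\mathcal Q_t(y)$. Kadison--Schwarz for $P_t$ gives $\mathcal Q_t(P_tx)\le P_{2t}|x|^2-|P_{2t}x|^2=\mathcal Q_{2t}(x)$. For the last term, split
\[
P_tx-P_{2t}x=P_t\bigl(x-P_tx\bigr),
\]
and note that $|P_t(x-P_tx)|^2\le P_t|x-P_tx|^2$. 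This is circular as it stands. Instead I bound it by writing $P_tx-P_{2t}x=(P_tx-P_{2t}x)$ as $P_{t}(x)-P_t(P_tx)$ and comparing both to $x$ through $\mathcal Q$-type quantities, for example $|P_tx-P_{2t}x|^2\le 2\mathcal Q_{2t}(x)+2\,\cdot$ (a similar term). If this simple route does not close, I will use the subordination/semigroup convexity argument of Junge--Mei. Each term is then bounded by $\|x\|_{\bmo_c(P)}^2$.

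\emph{Step 2: $\bmo_c(P)\lesssim\BMO_c(\widehat\Gamma)$.} This is the heart of the argument. Fix $t$ and consider
\[
\phi(s)=P_{t-s}\bigl(|P_s x|^2\bigr)\quad\text{for } 0\le s\le t,
\]
so that $\mathcal Q_t(x)=\phi(0)-\phi(t)$. Differentiating and using the definition \eqref{eq:gradient-form} of $\Gamma$ for the generator $\sqrt A$ expresses $-\phi'$ through a carr\'e du champ of $P_sx$. The plan is to rewrite this, via the standard integration by parts in $s$ (harmonicity of $(s,\cdot)\mapsto P_sx$ for $\partial_s^2-A$), as
\[
\mathcal Q_t(x)\approx \int_0^{\infty}P_{\cdot}\,\widehat\Gamma(P_sx,P_sx)\,\min(s,t)\,ds .
\]
The range $s\le t$ is controlled directly by $\|x\|_{\BMO_c(\widehat\Gamma)}^2$. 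For the tail $s>t$, the hypothesis $\Gamma_2\ge0$ enters: it gives $\Gamma(P_{s}x,P_sx)\le P_{s-r}\Gamma(P_rx,P_rx)$, and the analogous monotonicity holds for $|\partial_sP_sx|^2$ by Kadison--Schwarz. I then sum over dyadic blocks $[2^kt,2^{k+1}t]$, each bounded by a multiple of $\|x\|^2_{\BMO_c(\widehat\Gamma)}$ with a geometrically decaying weight coming from the factor $s$ against $1/s^2$ decay.

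\emph{Step 3: $\BMO_c(\widehat\Gamma)\lesssim\BMO_c(P)$.} Using the same identity as in Step 2, I will bound $P_t\int_0^t\widehat\Gamma(P_sx,P_sx)s\,ds$ by a constant times $P_t|x-P_tx|^2$ plus $\mathcal Q$-terms. The key point is that the left side is unchanged when $x$ is replaced by $x-P_{t}x+P_tx$. The $P_tx$ part contributes at most $\|\,\cdot\,\|$ of its gradient, which $\Gamma_2\ge0$ bounds by the $s\in[t,2t]$ range, and this is already handled. Combining Steps 1--3 yields the equivalence, with constants independent of the algebra.

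The main obstacle is Step 2. Making the identity for $\mathcal Q_t(x)$ in terms of $\widehat\Gamma$ precise, and controlling the $s>t$ tail using only $\Gamma_2\ge0$ with absolute constants, is where the noncommutativity (no pointwise inequalities, only operator ones) bites. This is exactly the content of \cite[Lemma~3.2]{JungeMei}.
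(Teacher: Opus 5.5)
The paper gives no argument for this lemma; it quotes it from \cite[Theorem~2.9, Lemma~3.2]{JungeMei}. Read as a proof, your proposal has genuine gaps in two of the three links of your cycle.

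In Step~1 the term $|P_tx-P_{2t}x|^2$ is left open. The only bound Kadison--Schwarz gives, $|P_t(x-P_tx)|^2\le P_t|x-P_tx|^2$, returns the left-hand side, and ``$2\mathcal Q_{2t}(x)+2\cdot(\text{a similar term})$'' is not an estimate. What works is a gradient argument. From $P_tx-P_{2t}x=-\int_t^{2t}\partial_sP_sx\,ds$ you get $|P_tx-P_{2t}x|^2\le t\int_t^{2t}|\partial_sP_sx|^2\,ds$. Then use $|\partial_sP_sx|^2\le P_{s-r}|\partial_rP_rx|^2$, average over $r\in[t/4,t/2]$, and apply the comparison of Poisson operators described below. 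This bounds the term by a constant times $\|x\|_{\BMO_c(\partial)}^2\le\|x\|_{\BMO_c(\widehat\Gamma)}^2$. So Step~1 really yields $\BMO_c(P)\lesssim\bmo_c(P)+\BMO_c(\widehat\Gamma)$, and your chain has to be reorganized.

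Step~3 does not close the cycle. Writing $x=(x-P_tx)+P_tx$ is a tautology. After $\widehat\Gamma(P_sx,P_sx)\le2\widehat\Gamma(P_s(x-P_tx),P_s(x-P_tx))+2\widehat\Gamma(P_{s+t}x,P_{s+t}x)$, the second term needs the scale-invariant bound $\|\widehat\Gamma(P_\sigma x,P_\sigma x)\|_\infty\lesssim\sigma^{-2}\|x\|_{\BMO_c(P)}^2$. That is not ``already handled'': Step~2 controls such terms by $\BMO_c(\widehat\Gamma)$, which is the quantity being estimated. Controlling them by ``$\mathcal Q$-terms'' presupposes $\bmo_c(P)\lesssim\BMO_c(P)$, which is not part of your chain. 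This is the genuinely delicate direction. One way to get it is to apply the lower half of the identity below to $x-P_\tau x$ at each scale $\tau$. This bounds $\|\widehat\Gamma(P_{2\tau}x-P_{3\tau}x,P_{2\tau}x-P_{3\tau}x)\|_\infty$ by $\tau^{-2}\|x\|^2_{\BMO_c(P)}$, and you then telescope along $\sigma_k=(3/2)^k\sigma$.

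Both gaps trace back to two ingredients your sketch never states. The first is the exact identity. Since $\partial_s^2P_sx=AP_sx$, one has $(\partial_s^2-A)|P_sx|^2=2\widehat\Gamma(P_sx,P_sx)$, and the Dirichlet Green function of $\partial_s^2-A$ on $(0,\infty)$ gives
\[
P_t|x|^2-|P_tx|^2=\int_0^\infty\Bigl(\int_{|t-r|}^{t+r}P_v\,dv\Bigr)\widehat\Gamma(P_rx,P_rx)\,dr .
\]
Your ``$\int_0^\infty P_\cdot\,\widehat\Gamma(P_sx,P_sx)\min(s,t)\,ds$'' hides that the Poisson operators vary with $v\in[|t-r|,t+r]$, whereas \eqref{eq:bmo-big} and \eqref{eq:bmo-hatgamma} have a single $P_t$ in front.

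The second ingredient is the comparison that converts one into the other. The subordination kernels $\varphi_s(w)=\frac{s}{2\sqrt\pi}w^{-3/2}e^{-s^2/(4w)}$ satisfy $\varphi_v(w)=\frac{v}{u}e^{-(v^2-u^2)/(4w)}\varphi_u(w)\le\frac{v}{u}\varphi_u(w)$ for $v\ge u$. Hence $P_vy\le\frac{v}{u}P_uy$ for all $y\ge0$. With it, the part $r\le t/2$ of the identity lies between $\frac12P_t\int_0^{t/2}\widehat\Gamma(P_rx,P_rx)\,r\,dr$ and $6P_{t/2}\int_0^{t/2}\widehat\Gamma(P_rx,P_rx)\,r\,dr$. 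Meanwhile $\Gamma_2\ge0$ and Kadison--Schwarz give $\|\widehat\Gamma(P_rx,P_rx)\|_\infty\le 24\,r^{-2}\|x\|_{\BMO_c(\widehat\Gamma)}^2$, which disposes of $r>t/2$ with no dyadic bookkeeping. Without this comparison, none of your steps can be written with the fixed $P_t$ the definitions require.

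Note also that $\Gamma_2\ge0$ is assumed for $T_t$. The version $\Gamma_A(P_sx,P_sx)\le P_s\Gamma_A(x,x)$ that you use in Step~2 must be derived from the subordination formula together with the convexity of the positive form $\Gamma_A$.
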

	
	\begin{lemma}\label{ext:multiplier}
		Let $(T_t)_{t\geq 0}$ be a quatum Markov semigroup on $(\cN,\tau)$ and $(P_t)_{t\geq 0}$ be its subordinated
		Poisson semigroup, and assume $\Gamma_2\ge0$. Let $a:(0,\infty)\to\mathbb C$ be measurable.
		Assume
		\begin{equation}
			c_a^2=\sup_{s>0}s\int_s^\infty
			\frac{|a(v-s)|^2}{v^2}\,dv<\infty.
			\label{eq:laplace-symbol-condition}
		\end{equation}
		Define
		\begin{equation*}
			M_af=\int_0^\infty a(t)\,\partial_tP_tf\,dt.
		\end{equation*}
		Then
		\begin{equation*}
			\|M_af\|_{\BMO_c(\Gamma)}
			\lesssim c_a\|f\|_{\BMO_c(\Gamma)}.
		\end{equation*}
	\end{lemma}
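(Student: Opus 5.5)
The plan is to reduce everything to the Cauchy--Schwarz inequality for the positive sesquilinear form $\Gamma_A$, together with the comparison of BMO norms in Lemma~\ref{ext:comparison}. We will not redo the Junge--Mei argument \cite[Lemma~3.2]{JungeMei} in full generality; the outline below follows its structure. First, by the semigroup law, for $s>0$,
\[
P_sM_af=\int_0^\infty a(t)\,\partial_tP_{t+s}f\,dt=\int_s^\infty a(v-s)\,\partial_vP_vf\,dv .
\]
The form $\Gamma_A$ is positive, so it satisfies an operator Cauchy--Schwarz inequality $\Gamma_A(\int g(v)y_v\,dv)\le(\int|g|^2w^{-1})\int w\,\Gamma_A(y_v)\,dv$. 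We apply it with weight $w(v)=v^2$ and use \eqref{eq:laplace-symbol-condition}, which gives
\[
\Gamma_A(P_sM_af,P_sM_af)\le c_a^2\,\frac1s\int_s^\infty v^2\,\Gamma_A(\partial_vP_vf,\partial_vP_vf)\,dv .
\]
Inserting this into \eqref{eq:bmo-gamma}, the factor $s$ cancels against $1/s$. It then remains to show
\[
\Bigl\|P_t\int_0^t\int_s^\infty v^2\,\Gamma_A(\partial_vP_vf,\partial_vP_vf)\,dv\,ds\Bigr\|_\infty\lesssim \|f\|_{\BMO_c(\Gamma)}^2 .
\]

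We split the $v$-integral at $v=2t$.

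\textbf{Local part, $v\le 2t$.} By Fubini the $s$-integration contributes at most $v$. We are left with $P_t\int_0^{2t}v^3\,\Gamma_A(\partial_vP_vf,\partial_vP_vf)\,dv$. Write $\partial_vP_v=P_{v/2}\,(\partial_uP_u)|_{u=v/2}$ and use $\Gamma_2\ge0$. For the Poisson semigroup this holds via subordination: the heat version $\Gamma_A(T_ux,T_ux)\le T_u\Gamma_A(x,x)$ averages, by convexity, to $\Gamma_A(P_rx,P_rx)\le P_r\Gamma_A(x,x)$. Combined with the semigroup property of $P$, this reduces the local part to a space--time square function of $f$ at scale $\approx t$, i.e.\ to a quantity controlled by $\|f\|_{\BMO_c(\widehat\Gamma)}^2$. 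Lemma~\ref{ext:comparison} then bounds it by $\|f\|_{\BMO_c(P)}^2$, and hence by $\|f\|_{\BMO_c(\Gamma)}^2$.

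\textbf{Far part, $v>2t$.} Here we use a pointwise decay bound
\[
\|\Gamma_A(\partial_vP_vf,\partial_vP_vf)\|_\infty\lesssim v^{-4}\|f\|_{\BMO}^2 .
\]
Heuristically, each derivative costs a factor $v^{-1}$ on a function of $\BMO$ size. This bound follows from $\bmo_c(P)$ control of $P_{v/2}f$ via Kadison--Schwarz and Lemma~\ref{ext:comparison}. Granting it, the far part is at most
\[
\|f\|^2\int_0^t\int_{2t}^\infty v^{-2}\,dv\,ds\lesssim\|f\|^2 .
\]

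The main obstacle is the decay estimate in the far part. It is exactly the place where the weight $v^2$ in \eqref{eq:laplace-symbol-condition} must match the scaling, since a weaker decay such as $v^{-3}$ would produce a logarithmic divergence. To prove it, we first try to bound $\partial_vP_vf=\partial_vP_v(f-P_{v/2}f)$, then apply $\Gamma_2\ge0$ once more and a column Cauchy--Schwarz argument against $P_{v/2}|f-P_{v/2}f|^2$, which is exactly the $\BMO_c(P)$ quantity \eqref{eq:bmo-big}.
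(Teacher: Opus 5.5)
The paper gives no proof of this lemma; it is quoted from \cite[Lemma~3.2]{JungeMei}. Your skeleton is sound. The weighted Cauchy--Schwarz step gives $\Gamma_A(P_sM_af,P_sM_af)\le c_a^2s^{-1}\int_s^\infty v^2\,\Gamma_A(\partial_vP_vf,\partial_vP_vf)\,dv$. After Fubini you must bound $P_t\int_0^\infty\min(v,t)\,v^2\,\Gamma_A(\partial_vP_vf,\partial_vP_vf)\,dv$, and your split at $v=2t$ together with $v^{-4}$ decay is the right way to do it.

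The gap is in how you close both parts. You bound the local part by $\|f\|_{\BMO_c(\widehat\Gamma)}^2$ and the far part through $\bmo_c(P)$ or $\BMO_c(P)$. You then claim Lemma~\ref{ext:comparison} controls these by $\|f\|_{\BMO_c(\Gamma)}^2$. It does not. That lemma only says that $\BMO_c(P)$, $\bmo_c(P)$ and $\BMO_c(\widehat\Gamma)$ are equivalent. Since $\widehat\Gamma(P_sx,P_sx)=\Gamma_A(P_sx,P_sx)+|\partial_sP_sx|^2\ge\Gamma_A(P_sx,P_sx)$, the only available comparison is $\|f\|_{\BMO_c(\Gamma)}\le\|f\|_{\BMO_c(\widehat\Gamma)}$, which points the wrong way. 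Dominating the vertical term $\int_0^t|\partial_sP_sf|^2s\,ds$ by the horizontal one is a Riesz-transform-type statement, and nothing in the paper supplies it. As written, you prove only $\|M_af\|_{\BMO_c(\Gamma)}\lesssim c_a\|f\|_{\BMO_c(P)}$. That weaker form would actually suffice for the application, since Lemma~\ref{lem:BMOtools}(i) is proved through $\BMO_c(P)$, but it is not the stated lemma.

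Two intermediate steps also fail:
\begin{itemize}
\item The bound $\Gamma_A(P_{v/2}h,P_{v/2}h)\le P_{v/2}\Gamma_A(h,h)$ with $h=\partial_uP_uf|_{u=v/2}$ leaves the second-order term $\Gamma_A(\partial_uP_uf,\partial_uP_uf)$. It does not reduce to a first-order square function.
\item The identity $\partial_vP_vf=\partial_vP_v(f-P_{v/2}f)$ is false, because $\partial_vP_v$ does not annihilate $P_{v/2}f$. Subtracting $P_{v/2}f$ is not like subtracting a constant.
\end{itemize}

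The missing idea is a smoothing estimate that stays entirely at the level of $\Gamma_A$. Write $\phi_r(u)=\frac{r}{2\sqrt\pi}u^{-3/2}e^{-r^2/(4u)}$ for the subordination kernel.
\begin{itemize}
\item Differentiating in $r$ gives $r\sqrt A\,P_r=-\int_0^\infty\psi_r(u)T_u\,du$ with $\psi_r(u)=\phi_r(u)\bigl(1-\frac{r^2}{2u}\bigr)$, and $|\psi_r|\lesssim\phi_r+\phi_{r/\sqrt2}$.
\item Cauchy--Schwarz for $\Gamma_A$ with the signed kernel, together with $\Gamma_2\ge0$, then yields $\Gamma_A(r\sqrt AP_rg,r\sqrt AP_rg)\lesssim(P_r+P_{r/\sqrt2})\Gamma_A(g,g)$.
\item Taking $r=v/2$ and $g=P_{v/2}f$ gives $v^2\Gamma_A(\partial_vP_vf,\partial_vP_vf)\lesssim(P_{v/2}+P_{v/(2\sqrt2)})\Gamma_A(P_{v/2}f,P_{v/2}f)$.
\item Since $\phi_{s'}\le2\phi_s$ for $s\le s'\le2s$, one has $P_{s'}\le2P_s$ on positive elements.
\end{itemize}

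For the local part: for $v\le2t$ both $P_tP_{v/2}$ and $P_tP_{v/(2\sqrt2)}$ are $\le2P_t$. Hence the local part is $\lesssim\|P_t\int_0^tw\,\Gamma_A(P_wf,P_wf)\,dw\|_\infty\le\|f\|_{\BMO_c(\Gamma)}^2$.

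For the far part: average $\Gamma_A(P_rf,P_rf)\le P_{r-s}\Gamma_A(P_sf,P_sf)$ against $s\,ds$ over $[r/4,r/2]$, and use $P_{r-s}\le2P_{r/2}$. This gives $\|\Gamma_A(P_rf,P_rf)\|_\infty\lesssim r^{-2}\|f\|_{\BMO_c(\Gamma)}^2$. Combined with the bullet above, this yields $\|\Gamma_A(\partial_vP_vf,\partial_vP_vf)\|_\infty\lesssim v^{-4}\|f\|_{\BMO_c(\Gamma)}^2$, which is your decay estimate in the right norm.

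With these two estimates, Lemma~\ref{ext:comparison} is not needed at all.
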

	
	
	\subsection{Interpolation of semigroup BMO spaces}
	
	For a quantutm Markov semigroup $(T_t)_{t\geq 0}$ on $(\cN, \tau)$ with the fixed-point projection
	$E_{\Fix}=\lim_{t\to\infty}T_t$ on $L_p^0(\cN)$ $(1<p<\infty)$, write
	\[
	L_p^0(\cN)=\ker E_{\Fix}.
	\]
	Equivalently this is the complemented part where $T_tx\to0$ for $x\in L_p(\cN)$. From \cite[Theorem 5.2(i)]{JungeMei}, we have the following interpolation theorem for semigroup BMO spaces.
	\begin{theorem}
		\label{ext:JMinterpolation}
		Let $(T_t)_{t\geq 0}$ be a quantum Markov semigroup on $(\cN, \tau)$ and
		assume it admits a Markov dilation.  Let $p\ge1$, $q>1$,
		and
		\[
		X\in\{\BMO(T),\BMO(P),\BMO(\widehat\Gamma),
		\BMO(\partial),\bmo(P)\}.
		\]
		Then
		\begin{equation*}
			L_{pq}^0(\cN)=[X,L_p^0(\cN)]_{1/q}
		\end{equation*}
		with equivalence constants depending only on $p, q$, and not on the von Neumann algebra or semigroup.
	\end{theorem}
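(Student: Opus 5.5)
The plan follows the strategy of Junge and Mei \cite[Theorem~5.2]{JungeMei}: transfer the semigroup BMO norm to a \emph{martingale} BMO norm on the dilation algebra $\cM$. Then apply the known noncommutative martingale interpolation theorem
\[
[\BMO(\cM),L_p(\cM)]_{1/q}=L_{pq}(\cM)
\]
(Musat; Junge--Musat), and come back to $\cN$ by a retract argument, as in Theorem~\ref{ext:basicinterpolation}(iii). Since the constants in the martingale interpolation depend only on $p,q$, so will the final constants.

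\emph{Easy inclusion.} For $x\in L_\infty^0(\cN)$ one has $\|x\|_X\lesssim\|x\|_\infty$ for every listed $X$.
\begin{itemize}[label=--]
\item For $\BMO(T)$, $\BMO(P)$ and $\bmo(P)$ this follows from contractivity of the unital completely positive maps $S_t$.
\item For $\BMO(\partial)$ and $\BMO(\widehat\Gamma)$ it follows from Lemma~\ref{ext:comparison} under $\Gamma_2\ge0$. For a general quantum Markov semigroup one would instead use the direct Junge--Mei estimate $P_t\int_0^t\widehat\Gamma(P_sx,P_sx)s\,ds\lesssim P_t|x|^2$.
\end{itemize}
Hence $L_\infty^0\to X$ is bounded. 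Interpolating against the identity on $L_p^0$, and using $[L_\infty^0,L_p^0]_{1/q}=L_{pq}^0$ (Theorem~\ref{ext:basicinterpolation}(iii),(iv) with the projection $I-E_{\Fix}$), gives $\|x\|_{[X,L_p^0]_{1/q}}\lesssim\|x\|_{pq}$.

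\emph{Reverse inclusion via the dilation.} Fix $t_0>0$ and set $\Phi(x)=\pi_{t_0}(x)$. By the Markov dilation property, the martingale generated by $\Phi(x)$ with respect to $(\cM_s)_{s\le t_0}$ is
\[
E_s\Phi(x)=\pi_s(T_{t_0-s}x).
\]
Its column martingale BMO norm is governed by
\[
E_s|\Phi(x)-E_s\Phi(x)|^2=\pi_s\bigl(T_{t_0-s}(x^*x)-|T_{t_0-s}x|^2\bigr),
\]
so $\|\Phi(x)\|_{\BMO_c(\cM)}\le\|x\|_{\bmo_c(T)}$, and similarly for rows. The initial term $E_0\Phi(x)=\pi_0(T_{t_0}x)$ tends to $0$ as $t_0\to\infty$ for mean-zero $x$. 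This is where the restriction to $L^0$ enters. Moreover $\Phi$ is an isometry on every $L_r$, and the trace-preserving conditional expectation onto $\pi_{t_0}(\cN)$, composed with $\pi_{t_0}^{-1}$, is a contractive left inverse on every $L_r$. Thus $L_{pq}^0(\cN)$ is a retract of the martingale interpolation scale, uniformly in $t_0$. This yields $\|x\|_{pq}\lesssim\|x\|_{[\bmo(T),L_p^0]_{1/q}}$, with constants from the martingale theorem only.

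\emph{Passing between the spaces $X$.} We need a bounded map from each $X$ into $\bmo(T)$, or a direct analogue of the above. The expected chain is
\[
\BMO(\partial)\hookrightarrow\BMO(\widehat\Gamma)\approx\bmo(P)\approx\BMO(P),
\]
together with a comparison between the heat and Poisson versions via subordination. For $\bmo(P)$ itself, $P$ is again a Markov semigroup admitting a dilation, so the martingale argument above can be run with $P$ in place of $T$.

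The main obstacle is this last step: the comparisons between $\BMO(\partial)$ and the others, and between $T$ and $P$, without the $\Gamma_2\ge0$ hypothesis. Junge and Mei handle it through $H_1$--$\BMO$ duality and square-function (Littlewood--Paley) estimates on $L_p$. I would first try to reduce everything to $\bmo(P)$ by duality with the $\widehat\Gamma$-square function $H_1$ space, and then invoke the martingale argument for $P$. Note that in the application to the Hamming cube, $\Gamma_2\ge0$ holds, so Lemma~\ref{ext:comparison} already suffices there.
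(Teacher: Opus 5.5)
The paper gives no proof here: the statement is quoted from \cite[Theorem~5.2(i)]{JungeMei}. Judged as a proof, your outline gives only the easy inclusion. The hard inclusion fails at the martingale step, before the comparison problem you flag.

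The quantity you bound, $\sup_s\|E_s|\Phi(x)-E_s\Phi(x)|^2\|$, is the martingale \emph{little} $\bmo$. Musat's theorem applies instead to the Pisier--Xu $\BMO$. Take a partition $s_0<\dots<s_N=t_0$ and write $m_s=E_s\Phi(x)$. The Pisier--Xu norm involves
\[
E_{s_k}|\Phi(x)-m_{s_{k-1}}|^2=\pi_{s_k}\bigl(T_{t_0-s_k}(x^*x)-|T_{t_0-s_k}x|^2\bigr)+|m_{s_k}-m_{s_{k-1}}|^2 .
\]
The increment term is not controlled by $\bmo(T)$ unless the dilation has (almost uniformly) continuous paths, which is not assumed. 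This is not a technicality. For martingales with jumps, little $\bmo$ does not interpolate with $L_p$ to $L_{pq}$; one only reaches conditioned Hardy spaces, which miss the diagonal term in Burkholder's inequality.

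The cube is such a case. The continuous-time random walk $\pi_s(f)=f(X_s)$ is a Markov dilation of $e^{-t\Delta}$, and its martingales jump by $2|D_jT_{t_0-s}f|$. Take $f(y)=y_1\prod_{j\ge2}\rho^{(1-y_j)/2}$ with $1-\rho=n^{-2/3}$. A product computation gives, for all $t>0$,
\[
\bigl\|T_t|f|^2-|T_tf|^2\bigr\|_\infty\le\frac{n(1-\rho)^2}{4}+\frac{2}{e(n-1)(1-\rho)}\lesssim n^{-1/3},
\]
so $\|f\|_{\bmo(T)}\lesssim n^{-1/6}$. Now consider the positive-probability event that coordinates $2,\dots,n$ of $X_{s_{N-1}}$ equal $1$ and $X_{t_0}$ differs from $X_{s_{N-1}}$ only in the first coordinate. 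On it, the last increment has modulus at least $1$. Hence $\|\pi_{t_0}(f)\|_{\BMO_c(\cM)}\ge1$, and your estimate $\|\Phi(x)\|_{\BMO_c(\cM)}\le\|x\|_{\bmo_c(T)}$ fails uniformly in $n$. This is presumably why $\bmo(T)$ is absent from the list. Rerunning the argument for $P$ does not help, since subordination creates jumps.

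The step you leave open is also not routine. For $\BMO(\partial)$ you need $\|x\|_{\BMO(\widehat\Gamma)}\lesssim\|x\|_{\BMO(\partial)}$. This is the nontrivial direction: only the reverse follows from $\widehat\Gamma(P_sx,P_sx)\ge|\partial_sP_sx|^2$. It is not contained in Lemma~\ref{ext:comparison}. That lemma, which you also use for the easy inclusion, assumes $\Gamma_2\ge0$, and that is not a hypothesis here.

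More importantly, the example shows that reducing everything to $\bmo(T)$ aims at the wrong target. What is missing is control of the \emph{full} martingale $\BMO$ norm, increments included, by the listed norms, or a substitute argument such as $H_1$--$\BMO$ duality. That is the actual content of \cite{JungeMei}.

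Two smaller repairs:
\begin{itemize}
\item Subtract $\pi_0(T_{t_0}x)$ from $\Phi(x)$ rather than letting it ``tend to $0$'', which gives no bound uniform over the unit ball of $X$. Then use $\|T_{t_0}x\|_{pq}\to0$ at the end.
\item Drop the retract claim. It would need the left inverse to be bounded from martingale $\BMO$ into $X$. It is also unnecessary: for the one-sided inequality it suffices that $\Phi$ be bounded on both endpoints and that $\pi_{t_0}$ be isometric on $L_{pq}$.
\end{itemize}
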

	
	For the cube heat semigroup, $E_{\Fix}=\E$, so this new notation agrees with the original defition \eqref{eq:mean-zero-space}.

	For $\Re z>0$, define the gamma function by
	\[
	\Gamma(z)=\int_0^\infty r^{z-1}e^{-r}\,dr.
	\]
	It extends meromorphically and obeys
	\[
	\Gamma(1+z)=z\Gamma(z),
	\qquad
	\Gamma(z)\Gamma(1-z)=\frac{\pi}{\sin(\pi z)}
	\]
	whenever both sides are defined, see \cite[Sections 5.2 and 5.5]{DLMF}.
	
	\medskip
	
	We end this section with the following lemma which is essential to the proof of Theorem \ref{thm:endpoint}.
	\begin{lemma}\label{lem:BMOtools}
		For every finite-dimensional Hilbert space $H$, the following hold uniformly in $n$ and $\dim H$.
		\begin{enumerate}[label=\textup{(\roman*)}]
			\item For any $F\in L_\infty(\Om;H)$,
			\begin{equation}
				\|F\|_{\BMO_{\Gamma,H}}
				\lesssim\|F\|_{L_\infty(\Om;H)}.
				\label{eq:linfty-bmo-gamma}
			\end{equation}
			\item For $u\in\mathbb R$ and mean-zero $F$,
			\begin{equation}
				\|\Delta^{-iu}F\|_{\BMO_{\Gamma,H}}
				\lesssim e^{\pi|u|}\|F\|_{\BMO_{\Gamma,H}}.
				\label{eq:imaginary-power-bmo}
			\end{equation}
			\item If $2<q<\infty$, then
			\begin{equation}
				[L_2^0(\Om;H),\BMO_{\partial,H}]_{1-2/q}
				=L_q^0(\Om;H)
				\label{eq:hilbert-interpolation}
			\end{equation}
			with equivalence constants depending only on $q$.
		\end{enumerate}
	\end{lemma}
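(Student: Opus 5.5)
We work throughout with the matrix realization $J$ of \eqref{eq:first-column-embedding}. For $H=\ell_2^m$ this reduces every statement to the corresponding column statement for the amplified cube semigroup on $\cN_m$. That semigroup is a quantum Markov semigroup satisfying $\Gamma_2\ge0$ and admitting a Markov dilation (Lemma~\ref{lem:dilation}). All constants from Lemma~\ref{ext:comparison}, Lemma~\ref{ext:multiplier} and Theorem~\ref{ext:JMinterpolation} are independent of the algebra, hence uniform in $n$ and $m=\dim H$.

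\textbf{Part (i).} Set $x=JF$. By Lemma~\ref{ext:comparison},
\[
\|x\|_{\BMO_c(\Gamma)}\le\|x\|_{\BMO_c(\widehat\Gamma)}\lesssim\|x\|_{\bmo_c(P)}.
\]
Moreover
\[
\|x\|_{\bmo_c(P)}^2=\sup_t\|P_t(x^*x)-P_t(x)^*P_t(x)\|_\infty\le\sup_t\|P_t(x^*x)\|_\infty\le\|x\|_\infty^2 .
\]
The middle inequality holds because $P_t(x)^*P_t(x)\ge0$ and both terms are positive; the last holds because $P_t$ is a unital completely positive contraction. Since $\|JF\|_{\BMO_c(\Gamma)}=\|F\|_{\BMO_{\Gamma,H}}$ and $J$ is isometric on $L_\infty$, this gives \eqref{eq:linfty-bmo-gamma}.

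\textbf{Part (ii).} We write $\Delta^{-iu}$ as a Laplace-type multiplier $M_a$ and apply Lemma~\ref{ext:multiplier}. On the mean-zero part, $\partial_tP_t=-\sqrt\Delta P_t$. For $\lambda>0$ and $\Re\alpha>0$,
\[
\int_0^\infty t^{\alpha-1}\lambda e^{-t\lambda}\,dt=\Gamma(\alpha)\lambda^{1-\alpha}.
\]
We need $\lambda^{1-\alpha}=(\lambda^2)^{-iu}=\lambda^{-2iu}$, so $\alpha=1+2iu$. Therefore
\[
a(t)=-\frac{t^{2iu}}{\Gamma(1+2iu)}
\]
gives $M_a=\Delta^{-iu}$ on mean-zero functions. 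For the constant in \eqref{eq:laplace-symbol-condition}, $|a|\equiv|\Gamma(1+2iu)|^{-1}$, and
\[
s\int_s^\infty v^{-2}\,dv=1,
\]
so $c_a=|\Gamma(1+2iu)|^{-1}$. The reflection formula gives
\[
|\Gamma(1+2iu)|^2=\frac{2\pi u}{\sinh(2\pi u)},
\]
so $c_a^2=\sinh(2\pi|u|)/(2\pi|u|)\lesssim e^{2\pi|u|}$, i.e.\ $c_a\lesssim e^{\pi|u|}$. Applying Lemma~\ref{ext:multiplier} to $JF$, noting that $M_a$ commutes with $J$, yields \eqref{eq:imaginary-power-bmo}.

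\textbf{Part (iii).} We apply Theorem~\ref{ext:JMinterpolation} with $X=\BMO(\partial)$, $p=2$, $q/2>1$:
\[
[\BMO(\partial),L_2^0(\cN_m)]_{2/q}=L_q^0(\cN_m),
\]
and then use symmetry (Theorem~\ref{ext:basicinterpolation}(ii)). To pass to $H$-valued functions we use the retract property (iii). Let $\mathsf P(y)=e_{\cdot1}\,y\,e_{11}$ be the projection onto the first column, whose range is $J(\text{functions }\Om\to H)$. It is contractive on every $L_p(\cN_m)$.

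\emph{Main obstacle.} We must check that $\mathsf P$ is bounded on $\BMO(\partial)$, with both row and column parts. Right multiplication by $e_{11}$ commutes with $P_s$ and $\partial_s$. For the column part,
\[
|\partial_sP_s(ye_{11})|^2=e_{11}|\partial_sP_sy|^2e_{11},
\]
so the column seminorm does not increase. For the row part we use the preceding lemma: on the range of $J$ the full norm equals the column norm. The column norm of $\mathsf P y$ is at most $\|y\|_{\BMO_c(\partial)}\le\|y\|_{\BMO(\partial)}$. Hence $\mathsf P$ is contractive on $\BMO(\partial)$. Since $J$ identifies $\mathsf P\,\BMO(\partial)$ isometrically with $\BMO_{\partial,H}$ (mean-zero parts are preserved), and $\mathsf P L_p^0(\cN_m)$ with $L_p^0(\Om;H)$, we obtain \eqref{eq:hilbert-interpolation} with constants depending only on $q$.
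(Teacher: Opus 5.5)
Your proposal is correct and follows the paper's proof essentially step for step. You use the same reduction through $J$ to column seminorms on $\cN_m$, the same symbol $t^{2iu}$ with the reflection-formula bound on $1/|\Gamma(1+2iu)|$, and the same Junge--Mei interpolation followed by the first-column retract. The differences are cosmetic: in (i) you bound $\bmo_c(P)$ instead of $\BMO_c(P)$, where the step $\|P_t(x^*x)-P_t(x)^*P_t(x)\|_\infty\le\|P_t(x^*x)\|_\infty$ really rests on Kadison--Schwarz (Lemma~\ref{ext:KS}) for positivity of the difference; and in (iii) you actually justify that $\mathsf P$ is contractive on $\BMO(\partial)$ via the preceding lemma, which the paper only asserts as clear.
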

	
	\begin{proof}
		Without loss of generality, assume $H=\ell_2^m$.
		For (i), by \eqref{eq:bmo-hatgamma} and Lemam \ref{ext:comparison}, one has
		\[
		\| F \|_{BMO_{\Gamma, H}} = \|JF\|_{\BMO_c(\Gamma)}
		\le\| JF\|_{\BMO_c(\widehat\Gamma)}\approx \|JF\|_{\BMO_c(P)}.
		\]
		Recall that \eqref{eq:bmo-big} gives
		\[
		\|JF\|_{\BMO_c(P)}
		=\sup_{t>0}\|P_t|JF-P_tJF|^2\|_{L_\infty(\cN_m)}^{1/2}.
		\]
		Applying the triangle inequality gives
		$$\|JF-P_tJF\|_{L_\infty(\cN_m)}\le2\| JF\|_{L_\infty(\cN_m)}.$$ 
		Since $P_t$ is contractive on $L_\infty(\cN_m)$, we obtain
		\[
		\|P_t|JF-P_tJF|^2\|_{L_\infty(\cN_m)}\le4\|JF\|_{L_\infty(\cN_m)}^2.
		\]
		So 
		$$ \|JF\|_{\BMO_c(P)}\le 2\|JF\|_{L_\infty(\cN_m)} = 2 \| F \|_{L_\infty(\Om;H)}. $$
		This proves \eqref{eq:linfty-bmo-gamma}.
		
		For (ii), define $a_u(v)=v^{2iu}=e^{2iu\log v}$ for $v>0$. Since $|a_u(v)|=1$,
		\[
		s\int_s^\infty\frac{|a_u(v-s)|^2}{v^2}\,dv
		=s\int_s^\infty v^{-2}\,dv=1.
		\]
		Thus \eqref{eq:laplace-symbol-condition} holds with $c_{a_u}=1$. For $A\subseteq [n]$ with $\lambda=|A|>0$, by using \eqref{eq:poisson-derivative}, one has
		\begin{align*}
			M_{a_u}w_A
			&=-\sqrt\lambda\int_0^\infty
			t^{2iu}e^{-t\sqrt\lambda}\,dt\,w_A\\
			&=-\lambda^{-iu}\int_0^\infty r^{2iu}e^{-r}\,dr\,w_A\\
			&=-\Gamma(1+2iu)\lambda^{-iu}w_A.
		\end{align*}
		On constants both sides vanish, and so
		\begin{equation*}
			M_{a_u}=-\Gamma(1+2iu)\Delta^{-iu}.
		\end{equation*}
		Lemma~\ref{ext:multiplier} now gives
		\begin{equation}
			\|\Delta^{-iu}F\|_{\BMO_{\Gamma,H}}
			\lesssim\frac{1}{|\Gamma(1+2iu)|}
			\|F\|_{\BMO_{\Gamma,H}}.
			\label{eq:gamma-multiplier-bound}
		\end{equation}
		
		For $y\ne0$, the recurrence and reflection identities of the Gamma function, together with
		$\sin(i\pi y)=i\sinh(\pi y)$, give
		\begin{align*}
			\Gamma(1+iy)\Gamma(1-iy)
			&=iy\,\Gamma(iy)\Gamma(1-iy)\\
			&=iy\,\frac{\pi}{\sin(i\pi y)}
			=\frac{\pi y}{\sinh(\pi y)}.
		\end{align*}
		The Euler integral also gives
		$\Gamma(1-iy)=\overline{\Gamma(1+iy)}$. Hence, for real $y$,
		\begin{equation*}
			|\Gamma(1+iy)|^2
			=\frac{\pi|y|}{\sinh(\pi|y|)},
		\end{equation*}
		with value $1$ at $y=0$ by continuity.  On $|y|\le2$ its reciprocal is
		bounded.  On $|y|>2$, using
		$\sinh(\pi|y|)\le e^{\pi|y|}/2$,
		\[
		\frac1{|\Gamma(1+iy)|}
		\lesssim\frac{e^{\pi|y|/2}}{\sqrt{|y|}}
		\lesssim e^{\pi|y|/2}.
		\]
		Set $y=2u$ in \eqref{eq:gamma-multiplier-bound} to obtain \eqref{eq:imaginary-power-bmo}.
		
		For (iii), apply Theorem~\ref{ext:JMinterpolation} to the finite
		algebra $\cN_m$. Lemma~\ref{lem:dilation} implies that any matrix amplification of the cube heat semigroup admits a Markov dilation.  Choose
		$r=2$, $s=q/2>1$, and $X=\BMO(\partial)$.  Then $rs=q$ and $1/s=2/q$,
		so
		\[
		L_q^0(\cN_m)
		=[\BMO(\partial),L_2^0(\cN_m)]_{2/q}.
		\]
		Theorem~\ref{ext:basicinterpolation}(ii) reverses the couple
		\begin{equation}
			L_q^0(\cN_m)
			=[L_2^0(\cN_m),\BMO(\partial)]_{1-2/q}.
			\label{eq:matrix-interpolation-reversed}
		\end{equation}
		Define the first-column projection $\mathsf P: L_\infty(\cN_m) \longrightarrow L_\infty(\cN_m) $ by
		$$ \mathsf{P}\left(\sum_{1\leq i, j\leq m} e_{ij}\otimes f_{ij}\right)=\sum_{i=1}^m e_{i1}\otimes f_{i1}.  $$
		It is clear that $\mathsf{P}$ is a contraction both on $L_2^0(\cN_m)$ and on $\BMO(\partial)$. Then by Theorem~\ref{ext:basicinterpolation}(iii) and \eqref{eq:matrix-interpolation-reversed},
		$$ 	\mathsf{P}L_q^0(\cN_m)
		=[\mathsf{P}L_2^0(\cN_m),\mathsf{P}\BMO(\partial)]_{1-2/q}.    $$
		Note that the embedding $J: L_\infty^0(\Omega_n; H) \longrightarrow \cN_m$ defined in \eqref{eq:first-column-embedding} is isometric both from $ L_p^0(\Omega_n; H)$ to $L_p^0(\cN_m)$ and from $\BMO_{\partial, H}$ to $\BMO(\partial)$. Hence,
		$$  (J^{-1}\mathsf{P})L_q^0(\cN_m)
		=[(J^{-1}\mathsf{P})L_2^0(\cN_m),(J^{-1}\mathsf{P})\BMO(\partial)]_{1-2/q},  $$
		which is exactly \eqref{eq:hilbert-interpolation}.
	\end{proof}
	
	\bigskip
	
	\section{Proof of Theorem~\ref{thm:endpoint}}\label{sec:diagonal}
	We prove the main theorem in this section.  The first step is a diagonal estimate for the Riesz transforms.
	\begin{lemma}\label{lem:diagonal}
		Let $2<q<\infty$ and set
		\begin{equation*}
			a=\frac12-\frac1q.
			\label{eq:critical-exponent-a}
		\end{equation*}
		For every $G=(g_1,\ldots,g_n):\Om\to\elln$,
		\begin{equation}
			\left\|\bigl(D_j\Delta^{-a}g_j\bigr)_{j=1}^n
			\right\|_{L_q(\Om;\elln)}
			\lesssim_q\|G\|_{L_q(\Om;\elln)}.
			\label{eq:diagonal-estimate}
		\end{equation}
	\end{lemma}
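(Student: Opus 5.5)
The estimate will follow from Stein's interpolation, Theorem~\ref{ext:Stein}, applied to the analytic family
\[
S_z G=\bigl(D_j\Delta^{-z/2}g_j\bigr)_{j=1}^n,
\]
which acts on mean-zero $\ell_2^n$-valued functions. The target exponent sits between two endpoints. At $\Re z=0$ we take $L_2^0(\Om;\elln)\to L_2^0(\Om;\elln)$. At $\Re z=1$ we take $L_\infty(\Om;\elln)\to\BMO_{\partial,\elln}$, or, more conveniently, $\BMO_{\Gamma,\elln}$ compared with $\BMO_{\partial,\elln}$ through Lemma~\ref{ext:comparison}. With $\theta=1-2/q$ we have $\theta/2=1/2-1/q=a$, and Lemma~\ref{lem:BMOtools}(iii) identifies the interpolated target with $L_q^0$. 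The source side $[L_2,L_\infty]_\theta=L_q$ comes from Theorem~\ref{ext:basicinterpolation}(iv). Constants $g_j$ can be removed first, since $D_j$ kills constants and $\Pi_0$ is bounded on $L_q$.

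\emph{The $L_2$ endpoint.} Here $D_j$ and $\Delta^{-it/2}$ are contractions on $L_2$: they are, respectively, an orthogonal projection and a unitary on mean-zero functions. Hence $\|S_{it}G\|_2^2=\sum_j\|D_j\Delta^{-it/2}g_j\|_2^2\le\|G\|_2^2$.

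\emph{The $\BMO$ endpoint.} Write $S_{1+it}=\Delta^{-it/2}S_1$. By Lemma~\ref{lem:BMOtools}(ii), applied with the $\BMO_\Gamma$ version, $\Delta^{-it/2}$ costs only $e^{\pi|t|/2}$. This growth is admissible in Stein's theorem once we multiply the family by $e^{z^2}$ or a similar damping factor. It therefore suffices to show
\[
\bigl\|(D_j\Delta^{-1/2}g_j)_j\bigr\|_{\BMO}\lesssim\|G\|_{L_\infty(\Om;\elln)}.
\]
This is the main obstacle. I would compute the $\BMO_{\Gamma,\elln}$ norm directly:
\[
\sum_i\sum_j|D_iP_sD_j\Delta^{-1/2}g_j|^2 .
\]
The $i=j$ terms equal $\sum_j|P_sD_j\Delta^{-1/2}g_j|^2$. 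Since $\Delta^{-1/2}$ has the subordination form $\int_0^\infty P_r\,dr$, these terms resemble the $\partial$-square function of $\Delta^{-1/2}g_j$. The off-diagonal terms $i\neq j$ are harder.

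The cleanest route is to avoid $\Delta^{-1/2}$ in the $\BMO$ norm. Using $\partial_sP_s=-\Delta^{1/2}P_s$, we get
\[
\bigl\|\partial_sP_s\bigl(D_j\Delta^{-1/2}g_j\bigr)_j\bigr\|^2=\sum_j|P_sD_jg_j|^2 .
\]
So the $\BMO_{\partial,\elln}$ norm of $S_1G$ is
\[
\sup_t\Bigl\|P_t\int_0^t\sum_j|P_sD_jg_j|^2\,s\,ds\Bigr\|_\infty .
\]
By Kadison--Schwarz (Lemma~\ref{ext:KS}), $|P_sD_jg_j|^2\le P_s|D_jg_j|^2$. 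This crude bound fails to integrate against $s\,ds$ on its own. Instead I would use $D_jg_j=(g_j-\sigma_jg_j)/2$ and the fact that $P_sD_j=D_jP_s$, giving
\[
|D_jP_sg_j|^2\le\Gamma_\Delta\text{-type terms},
\]
bounded by $\sum_i|D_iP_s(JG\text{-column }j)|^2$. After summing over $j$ this is the $\BMO_c(\Gamma)$ quantity of the $n\times n$ matrix-valued function with entries $g_j$ in the diagonal position $e_{jj}$. Its $\BMO_c(\Gamma)$ norm is at most that of $\BMO_c(P)$ by Lemma~\ref{ext:comparison}. It is therefore bounded by $2\|\mathrm{diag}(g_j)\|_\infty\le2\|G\|_{L_\infty(\elln)}$, exactly as in Lemma~\ref{lem:BMOtools}(i). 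I expect that making this diagonal-matrix trick and the switch between the $\partial$ and $\Gamma$ norms rigorous (including the row part, which is harmless by the column-trace argument) is the delicate step. I would also need to verify the analyticity and uniform boundedness hypotheses of Theorem~\ref{ext:Stein}; these are trivial in finite dimensions.
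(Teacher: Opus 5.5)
Your overall architecture matches the paper's: the same analytic family $D_j\Delta^{-z/2}g_j$, the $L_2^0$ and $\BMO_{\partial}$ endpoints, the identity $\partial_sP_sD_j\Delta^{-1/2}=-P_sD_j$, Lemma~\ref{lem:BMOtools}(i)--(iii), and Gaussian damping. However, the $\BMO$ endpoint as you organize it does not close. You write $S_{1+it}=\Delta^{-it/2}S_1$ and reduce to $\|S_1G\|_{\BMO}\lesssim\|G\|_{L_\infty(\Om;\elln)}$, but that reduction is not sufficient, for three reasons.
\begin{itemize}
\item The target must be $\BMO_{\partial,\elln}$, because only for it does Lemma~\ref{lem:BMOtools}(iii) identify the interpolation space; $\BMO(\Gamma)$ is not among the spaces in Theorem~\ref{ext:JMinterpolation}.
\item Lemma~\ref{lem:BMOtools}(ii) bounds $\Delta^{-iu}$ only on $\BMO_{\Gamma,H}$. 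Lemma~\ref{ext:comparison} does not compare $\BMO_c(\Gamma)$ with $\BMO_c(\partial)$; both are merely dominated by $\BMO_c(\widehat\Gamma)$.
\item Moving the imaginary power to the input while keeping $L_\infty$ there does not help, since nothing gives a dimension-free bound for $\Delta^{-it/2}$ on $L_\infty$.
\end{itemize}
What you actually need is the stronger endpoint statement
\[
\|S_1F\|_{\BMO_{\partial,\elln}}\le\|F\|_{\BMO_{\Gamma,\elln}},
\]
applied to the input $F=\Delta^{-it/2}G$ and followed by (ii) and then (i):
\[
\|S_{1+it}G\|_{\BMO_{\partial,\elln}}\le\|\Delta^{-it/2}G\|_{\BMO_{\Gamma,\elln}}\lesssim e^{\pi|t|/2}\|G\|_{\BMO_{\Gamma,\elln}}\lesssim e^{\pi|t|/2}\|G\|_{L_\infty(\Om;\elln)}.
\]
Your computation in fact proves the displayed $\BMO_\Gamma\to\BMO_\partial$ bound. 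You dominate $\sum_j|D_jP_sg_j|^2$ by $\sum_{i,j}|D_iP_sg_j|^2$, which is the $\BMO_{\Gamma,\elln}$ integrand of $G$, before ever using $L_\infty$. This is exactly the paper's factorization $U_{1/2+it}G=QR(\Delta^{-it}G)$, with $\|RF\|_{\BMO_{\partial,\ell_2^n(H)}}=\|F\|_{\BMO_{\Gamma,H}}$ and $Q$ the contractive diagonal extraction.

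Second, your diagonal-matrix step is wrong as stated. For $X=\sum_je_{jj}\otimes g_j$ one has $\Gamma_\Delta(P_sX,P_sX)=\sum_je_{jj}\otimes\Gamma_\Delta(P_sg_j,P_sg_j)$. Its operator norm is a maximum over $j$, not the sum over $j$ you need. Bounding the summands separately would give $\sum_j\|g_j\|_\infty^2$, which is not dimension-free. The $\ell_2$-sum over $j$ comes from the column embedding $JG=\sum_je_{j1}\otimes g_j$, for which $(JG)^*(JG)=e_{11}\otimes\sum_j|g_j|^2$. So you should simply invoke Lemma~\ref{lem:BMOtools}(i), which gives $\|G\|_{\BMO_{\Gamma,\elln}}\lesssim\|G\|_{L_\infty(\Om;\elln)}$ directly. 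With these two corrections your argument coincides with the paper's proof.
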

	
	\begin{proof}
		First suppose every $g_j$ has mean zero, and denote $H=\elln$.  On the
		strip $0\le\Re z\le1/2$, define
		\begin{equation*}
			U_zG=\bigl(D_j\Delta^{-z}g_j\bigr)_{j=1}^n.
		\end{equation*}
		By \eqref{eq:fractional-powers}, it is clear that $z\mapsto U_z$ is
		entire.
		
		Let $t\in\mathbb R$. By using \eqref{eq:walsh-eigenvalues} and
		$|\,|A|^{-it}|=1$, one has
		\begin{equation*}
			\begin{aligned}
				\|U_{it}G\|_2^2
				&=\sum_{j=1}^n\|D_j\Delta^{-it}g_j\|_2^2 =\sum_{j=1}^n
				\sum_{\substack{A\subseteq[n]\\j\in A}}
				\left||A|^{-it}\widehat g_j(A)\right|^2\\
				&=\sum_{j=1}^n
				\sum_{\substack{A\subseteq[n]\\j\in A}}
				|\widehat g_j(A)|^2\le\sum_{j=1}^n\sum_{A\subseteq[n]}|\widehat g_j(A)|^2
				=\|G\|_2^2.
			\end{aligned}
		\end{equation*}
		Thus $U_{it}$ is a contraction on $L_2^0(\Om;H)$.

		For an $H$-valued function $F$, define
		\[
		RF=\bigl(D_i\Delta^{-1/2}F\bigr)_{i=1}^n,
		\]
		with values in $\ell_2^n(H)$.  By \eqref{eq:spectral-identities} and
		\eqref{eq:poisson-derivative}, one has for each $i$,
		\begin{equation*}
			\partial_sP_s(D_i\Delta^{-1/2}F)
			=-\sqrt\Delta P_sD_i\Delta^{-1/2}F
			=-D_iP_sF.
		\end{equation*}
		Therefore, for every $t>0$,
		\begin{align*}
			P_t\int_0^t
			\|\partial_sP_s(RF)\|_{\ell_2^n(H)}^2s\,ds = P_t\int_0^t
			\sum_{i=1}^n\|D_iP_sF\|_H^2s\,ds.
		\end{align*}
		By \eqref{eq:hilbert-bmo-partial}--\eqref{eq:hilbert-bmo-gamma}, one has
		\begin{equation}
			\|RF\|_{\BMO_{\partial,\ell_2^n(H)}}
			=\|F\|_{\BMO_{\Gamma,H}}.
			\label{eq:riesz-bmo-identity}
		\end{equation}
		
		Now identify an element of $\ell_2^n(H)=\ell_2^n(\ell_2^n)$ with an array
		$(h_{ij})_{1\le i,j\le n}$ and define
		\[
		Q(h_{ij})=(h_{jj})_{j=1}^n\in H.
		\]
		Then
		\[
		\|Q(h_{ij})\|_H^2=\sum_j|h_{jj}|^2
		\le\sum_{i,j}|h_{ij}|^2,
		\]
		so $\|Q\|\le1$.  Since $Q$ acts only on Hilbert coordinates, it commutes
		with $P_s$ and $\partial_sP_s$.  Directly from \eqref{eq:hilbert-bmo-partial},
		\begin{equation}
			\|QK\|_{\BMO_{\partial,H}}
			\le\|K\|_{\BMO_{\partial,\ell_2^n(H)}}.
			\label{eq:diagonal-contraction}
		\end{equation}
		For real $t$, the $j$th coordinate of $QR(\Delta^{-it}G)$ is
		$D_j\Delta^{-1/2}\Delta^{-it}g_j$.  Therefore
		\begin{equation}
			U_{1/2+it}G=QR(\Delta^{-it}G).
			\label{eq:upper-factorization}
		\end{equation}
		Equations \eqref{eq:riesz-bmo-identity}--\eqref{eq:upper-factorization} and
		Lemma~\ref{lem:BMOtools}(i)--(ii) give
		\begin{equation*}
			\begin{aligned}
				\|U_{1/2+it}G\|_{\BMO_{\partial,H}}
				&\le\|R(\Delta^{-it}G)\|_{\BMO_{\partial,\ell_2^n(H)}} = \|\Delta^{-it}G\|_{\BMO_{\Gamma,H}}\\
				&\lesssim e^{\pi|t|}\|G\|_{\BMO_{\Gamma,H}} \lesssim e^{\pi|t|}\|G\|_{L_\infty(\Om;H)}.
			\end{aligned}
		\end{equation*}
		
		Rescale to the unit strip by
		\[
		V_w=U_{w/2},\qquad0\le\Re w\le1,
		\]
		and set
		\begin{equation*}
			\theta=1-\frac2q.
		\end{equation*}
		If $w=iy$, we have
		\[
		\|V_{iy}\|_{L_2^0(\Omega_n;H)\to L_2^0(\Omega_n;H)}\le1.
		\]
		If $w=1+iy$, we also obtain
		\begin{equation}
			\|V_{1+iy}\|_{L_\infty^0(\Omega_n;H)\to\BMO_{\partial,H}}
			\lesssim e^{\pi|y|/2}.
			\label{eq:upper-boundary-growth}
		\end{equation}
		
		Now define
		\begin{equation*}
			\widetilde V_w=e^{(w-\theta)^2}V_w.
		\end{equation*}
		At $w=\theta$ the scalar factor is one. Note that
		\[
		\Re(iy-\theta)^2=\theta^2-y^2,
		\]
		and
		\[
		\Re(1+iy-\theta)^2=(1-\theta)^2-y^2.
		\]
		After multiplication with \eqref{eq:upper-boundary-growth}, the $y$-dependent exponent is
		$-y^2+(\pi/2)|y|$.  The elementary identity
		\begin{equation*}
			- y^2+b|y|
			=-\left(|y|-\frac{b}{2}\right)^2
			+\frac{b^2}{4}
			\le\frac{b^2}{4}
		\end{equation*}
		shows that the upper bound is uniform.  Thus $\widetilde V_w$ satisfies
		all hypotheses of Theorem~\ref{ext:Stein}. Since $\widetilde V_\theta=V_\theta$, we obtain the boundedness of
		\begin{equation*}
			V_\theta:
			[L_2^0(\Omega_n;H),L_\infty^0(\Omega_n;H)]_\theta
			\longrightarrow
			[L_2^0(\Omega_n;H),\BMO_{\partial,H}]_\theta.
		\end{equation*}
		
		It is clear that Theorem~\ref{ext:basicinterpolation}(iii)--(iv) gives
		\begin{equation*}
			[L_2^0(\Omega_n;H),L_\infty^0(\Omega_n;H)]_\theta=L_q^0(\Omega_n;H)
		\end{equation*}
		with equivalent norms, because
		\[
		\frac1q=\frac{1-\theta}{2}+\frac{\theta}{\infty}
		=\frac{1-\theta}{2}.
		\]
		By Lemma~\ref{lem:BMOtools}(iii), $	[L_2^0(\Omega_n;H),\BMO_{\partial,H}]_\theta=L_q^0(\Omega_n;H)$. Hence
		\[
		\|V_\theta G\|_{L_q^0(\Omega_n;H)}\lesssim_q\|G\|_{L_q^0(\Omega_n;H)}.
		\]
		Finally
		\[
		\frac{\theta}{2}=\frac12-\frac1q=a,
		\]
		so $V_\theta=U_a$, proving \eqref{eq:diagonal-estimate} for coordinatewise mean-zero $G$.
		
		For arbitrary $G$, let $G_0=G-\E G$.  The convention \eqref{eq:fractional-powers} implies $U_aG=U_aG_0$. Note that
		\[
		\|G_0\|_{L_q(\Om; H)}
		\le\|G\|_{L_q(\Om; H)}+\|\E G\|_{L_q(\Om; H)}
		\le2\|G\|_{L_q(\Om; H)}.
		\]
		Applying the mean-zero case to $G_0$ proves \eqref{eq:diagonal-estimate} for all $G$.
	\end{proof}
	
	Another auxiliary result is the following inequality established by Ivanisvili and Volberg \cite[Formula~(6.5)]{IvanisviliVolberg}.
	
	\begin{lemma}
		\label{ext:IV}
		For every $2\le q<\infty$, every $n\ge1$, and scalar functions
		$f_1,\ldots,f_n$ on $\Om$,
		\begin{equation*}
			\left\|\sum_{j=1}^nD_j\Delta^{-1/2}f_j\right\|_{q}
			\lesssim_q \left\|\left(\sum_{j=1}^n|D_jf_j|^2\right)^{1/2}
			\right\|_{q}.
		\end{equation*}
	\end{lemma}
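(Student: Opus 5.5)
We plan to reduce the estimate to a duality statement about the Riesz transforms on $L_{q'}$, $1<q'\le2$, and then use Lust-Piquard's noncommutative (Pauli/fermion) description of the discrete gradient, in which row and column square functions coincide for the functions that arise here. The naive route fails, and it shows what has to be proved instead. One could pair with $g\in L_{q'}(\Om)$, $\|g\|_{q'}\le1$, and use $D_j^*=D_j=D_j^2$ together with \eqref{eq:spectral-identities}:
\[
\Bigl\langle \sum_j D_j\Delta^{-1/2}f_j,g\Bigr\rangle=\sum_j\langle D_jf_j, D_j\Delta^{-1/2}g\rangle .
\]
Hölder would then demand $\|(D_j\Delta^{-1/2}g)_j\|_{L_{q'}(\Om;\elln)}\lesssim\|g\|_{q'}$ for $q'<2$. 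This fails by Lamberton's counterexample. So the $\ell_2$-valued dual norm must be replaced by a weaker noncommutative sum norm.

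\textbf{Step 1 (reduction).} Set $h_j=D_jf_j$. Since $D_j^2=D_j$, we have $D_j\Delta^{-1/2}f_j=\Delta^{-1/2}h_j$, and the claim reads
\[
\Bigl\|\Delta^{-1/2}\sum_j h_j\Bigr\|_q\lesssim_q\Bigl\|\bigl(\sum_j|h_j|^2\bigr)^{1/2}\Bigr\|_q,\qquad h_j\in\operatorname{ran}D_j .
\]
Each $h_j$ is odd in $x_j$, so $h_j=x_jk_j$ with $k_j$ independent of $x_j$.

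\textbf{Step 2 (Lust-Piquard's lower bound for $q'<2$, dualized).} We use Lust-Piquard's theorem \cite{LP1998} for $1<q'<2$:
\[
\|\Delta^{1/2}g\|_{q'}\approx\inf\Bigl\{\bigl\|(\textstyle\sum_j|a_j|^2)^{1/2}\bigr\|_{q'}+\bigl\|(\textstyle\sum_j|b_j|^2)^{1/2}\bigr\|_{q'}\Bigr\}.
\]
The infimum runs over decompositions $D_jg=a_j+b_j$, where $(a_j)$ sits in the column space and $(b_j)$ in the row space of the crossed product of $L_\infty(\Om)$ with the fermions (or Pauli matrices) $Q_j$. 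Concretely, Lust-Piquard's noncommutative Khintchine inequality for $\sum_j Q_j\otimes D_jg$ gives
\[
\|R g\|_{L_{q'}(\Om;\elln)_{\rm row}+L_{q'}(\Om;\elln)_{\rm col}}\lesssim\|g\|_{q'} .
\]
By duality this is equivalent to the bound
\[
\Bigl\|\sum_j D_j\Delta^{-1/2}h_j\Bigr\|_q\lesssim\max\{\|(h_j)\|_{\rm row},\|(h_j)\|_{\rm col}\}\quad\text{in }L_q,\ q\ge2,
\]
where row and column are measured in the same crossed-product algebra.

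\textbf{Step 3 (row $=$ column for our $h_j$).} For $h_j=x_jk_j$ we need to check that, after Lust-Piquard's transference $x_j\mapsto Q_j$, the relevant elements $\sum_j Q_j\otimes h_j$ satisfy
\[
\Bigl(\sum_j Q_j\otimes h_j\Bigr)^*\Bigl(\sum_j Q_j\otimes h_j\Bigr)
\]
equal to $\sum_j|h_j|^2$ up to anticommutator cross terms. Those cross terms cancel precisely because $h_j$ is odd in the $j$-th variable; this is where $h_j\in\operatorname{ran}D_j$ is used. Both the row and the column norm are then controlled by $\|(\sum_j|h_j|^2)^{1/2}\|_q$ through the noncommutative Khintchine inequality for $q\ge2$, which finishes the proof.

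\textbf{Main obstacle.} Step 3 is the delicate part: setting up the transference correctly and verifying that the cross terms vanish. If the cancellation is not exact, I would instead bound the row norm by the column norm using the fact that $q\ge2$, combined with the conditional-expectation structure in the variable $x_j$. This is the route I expect Ivanisvili--Volberg \cite{IvanisviliVolberg} to follow in formula (6.5).
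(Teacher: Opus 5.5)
Your strategy is viable and differs from the paper, which gives no argument for this lemma and only cites Ivanisvili--Volberg, formula (6.5). The strategy is to dualize Lust-Piquard's crossed-product Riesz estimate on $L_{q'}$, $1<q'<2$, and evaluate the dual norm on the odd coefficients $h_j=D_jf_j$. But the step that carries the proof rests on a false mechanism.

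\textbf{The gap in Step 3.} You claim the off-diagonal terms of $(\sum_j Q_j\otimes h_j)^*(\sum_j Q_j\otimes h_j)$ cancel because $h_j$ is odd in $x_j$. Realize the crossed product on $L_2(\Om)$: let $M_\phi$ be multiplication by $\phi$, so $\sigma_jM_\phi\sigma_j=M_{\sigma_j\phi}$ and the normalized trace of $M_\phi$ is $\E\phi$. The $j$-th gradient coefficient is $M_{D_jf}\sigma_j=-\frac12[\sigma_j,M_f]$, so put $a_j=M_{h_j}\sigma_j$. Take $n=2$ and $h_1=x_1$, $h_2=x_2$, both odd. With commuting Rademachers $\epsilon_j$, the off-diagonal part of $|\sum_j\epsilon_j\otimes a_j|^2$ is
\[
\epsilon_1\epsilon_2\otimes(a_1^*a_2+a_2^*a_1)=-2\,\epsilon_1\epsilon_2\otimes M_{x_1x_2}\sigma_1\sigma_2\neq0 .
\]
With anticommuting generators, the choice $h_1=x_1x_2$, $h_2=x_2$ gives $a_1^*a_2-a_2^*a_1=-2M_{x_1}\sigma_1\sigma_2\neq0$. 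So no cancellation occurs. It is also irrelevant: noncommutative Khintchine only involves $\sum_ja_ja_j^*$ and $\sum_ja_j^*a_j$, which contain no cross terms.

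\textbf{The related problem in Step 2.} Read literally, with $a_j,b_j$ functions and the two norms you wrote, the display is false. The triangle inequality makes that infimum comparable to $\|\nabla g\|_{L_{q'}(\Om;\elln)}$, which is exactly the estimate Lamberton refutes. The crossed-product structure must appear in one of the two norms as the twist $b_j\mapsto\sigma_jb_j$.

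\textbf{The missing verification.} For $a_j=M_{\phi_j}\sigma_j$ one has $a_ja_j^*=M_{|\phi_j|^2}$ and $a_j^*a_j=M_{\sigma_j(|\phi_j|^2)}$. For general $\phi_j$ these differ, which is why the $q'<2$ sum norm can be much smaller than the $L_{q'}(\Om;\elln)$ norm. For $h_j$ odd in $x_j$, $|h_j|^2$ is even in $x_j$, so both square functions equal $M_{\sum_j|h_j|^2}$. Now set
\[
X_H=\sum_j\epsilon_j\otimes M_{h_j}\sigma_j,\qquad X_{Rg}=\sum_j\epsilon_j\otimes M_{D_j\Delta^{-1/2}g}\sigma_j .
\]
Their trace pairing is $\sum_j\E[\overline{h_j}\,D_j\Delta^{-1/2}g]=\langle g,\sum_jD_j\Delta^{-1/2}h_j\rangle$. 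Combine Hölder, the bound $\|X_{Rg}\|_{q'}\lesssim_{q'}\|g\|_{q'}$ you cite from Lust-Piquard, and the upper Khintchine inequality for $q\ge2$. Since $\sum_jD_j\Delta^{-1/2}h_j=\sum_jD_j\Delta^{-1/2}f_j$, this gives
\[
\Bigl\|\sum_j D_j\Delta^{-1/2}f_j\Bigr\|_q\lesssim_q\|X_H\|_q\lesssim_q\Bigl\|\bigl(\textstyle\sum_j|D_jf_j|^2\bigr)^{1/2}\Bigr\|_q .
\]
Your fallback plan is not needed. With this repair you have a self-contained derivation from Lust-Piquard's theorem, where the paper only cites the result. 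It also makes visible that the only property of $D_jf_j$ used is that $|D_jf_j|$ is invariant under the reflection $\sigma_j$.
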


	In the following, fix $1<p<2$ and set
	\begin{equation*}
		q=\frac{p}{p-1}>2.
	\end{equation*}
	Then $1/p+1/q=1$. Define
	\begin{equation*}
		T_p=\nabla\Delta^{-1/p}:
		L_p(\Om)\longrightarrow L_p(\Om;\elln).
	\end{equation*}
	This operator annihilates constants. The following proposition concerns the boundedness of $T_p$.
	
	\begin{proposition}\label{prop:Tp}
		The norm of $T_p$ is bounded by a constant depending only on $p$,
		uniformly in $n$.
	\end{proposition}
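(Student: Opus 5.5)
By duality, it suffices to bound the adjoint $T_p^*:L_q(\Om;\elln)\to L_q(\Om)$ with $q=p/(p-1)$. Since $D_j^*=D_j$, $(\Delta^{z})^*=\Delta^{\overline z}$ on mean-zero functions, and $D_j$ commutes with $\Delta^z$, we have
\[
T_p^*G=\sum_{j=1}^n \Delta^{-1/p}D_jg_j=\sum_{j=1}^n D_j\Delta^{-1/p}g_j ,
\qquad G=(g_1,\ldots,g_n).
\]
The plan is to prove $\|T_p^*G\|_q\lesssim_q\|G\|_{L_q(\Om;\elln)}$ uniformly in $n$, and then conclude $\|T_pf\|_{L_p(\Om;\elln)}=\sup\{|\langle T_pf,G\rangle|:\|G\|_{L_q(\Om;\elln)}\le1\}\le\|T_p^*\|\,\|f\|_p$. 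One should note that the pairing only sees $\Pi_0 f$. Since $\|\Pi_0f\|_p\le 2\|f\|_p$ and $T_p$ annihilates constants, no issue arises from the mean.

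The key step is to split the exponent. With $a=\tfrac12-\tfrac1q$ as in Lemma~\ref{lem:diagonal}, we have
\[
\frac1p=1-\frac1q=\frac12+a .
\]
By \eqref{eq:spectral-identities},
\[
D_j\Delta^{-1/p}g_j=D_j\Delta^{-1/2}\bigl(\Delta^{-a}g_j\bigr).
\]
Set $f_j=\Delta^{-a}g_j$, which is mean zero. Lemma~\ref{ext:IV} then gives
\[
\|T_p^*G\|_q=\Bigl\|\sum_j D_j\Delta^{-1/2}f_j\Bigr\|_q\lesssim_q\Bigl\|\Bigl(\sum_j|D_jf_j|^2\Bigr)^{1/2}\Bigr\|_q=\bigl\|(D_j\Delta^{-a}g_j)_{j=1}^n\bigr\|_{L_q(\Om;\elln)} .
\]
Lemma~\ref{lem:diagonal} bounds the last quantity by $\lesssim_q\|G\|_{L_q(\Om;\elln)}$. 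Combining the two estimates yields $\|T_p\|_{L_p\to L_p(\elln)}\lesssim_p 1$, independent of $n$.

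The substantive input is therefore already contained in Lemma~\ref{lem:diagonal} (the BMO endpoint) and Lemma~\ref{ext:IV}. The only delicate points are bookkeeping. First, $q\ge2$ must be in the range of Lemma~\ref{ext:IV}; it is, since $q>2$ because $p<2$. Second, the duality computation must be carried out on mean-zero functions, where the identity $(\Delta^{-1/p})^*=\Delta^{-1/p}$ holds. Since $D_j$ kills constants, $\sum_j D_j\Delta^{-1/p}g_j$ depends only on $G-\E G$, which costs at most a factor $2$. I expect this step, verifying that the adjoint formula holds exactly with the paper's convention \eqref{eq:fractional-powers}, to be the only place requiring care. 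In Walsh coordinates it is immediate: both sides act on $w_A$ by the factor $\one_{\{j\in A\}}|A|^{-1/p}$.
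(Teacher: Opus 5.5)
Your proposal is correct and follows essentially the same route as the paper: duality to $T_p^*G=\Delta^{-1/p}\sum_j D_jg_j$, splitting $1/p=\tfrac12+a$, then Lemma~\ref{ext:IV} applied to $f_j=\Delta^{-a}g_j$ followed by Lemma~\ref{lem:diagonal}. The extra mean-zero bookkeeping you flag is harmless. It is in fact unnecessary: under the convention \eqref{eq:fractional-powers}, $\Delta^{-1/p}$ is a real multiplier that vanishes on constants, so it is self-adjoint on all of $L_2(\Om)$.
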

	
	\begin{proof}
		Put $a=1/2-1/q$.  Since $1/q=1-1/p$,
		\begin{equation*}
			a=\frac12-\left(1-\frac1p\right)
			=\frac1p-\frac12,
			\qquad
			\frac12+a=\frac1p.
		\end{equation*}
		Let $G=(g_1,\ldots,g_n)\in L_q(\Om;\elln)$.  Applying Lemma~\ref{ext:IV} to
		$ f_j=\Delta^{-a}g_j$ 
		yields
		\begin{equation}\label{eq:adjoint-factorization-bound}
			\begin{aligned}
				\left\|\sum_{j=1}^nD_j\Delta^{-1/2}
				(\Delta^{-a}g_j)\right\|_q
				&\lesssim_q
				\left\|\bigl(D_j\Delta^{-a}g_j\bigr)_{j=1}^n
				\right\|_{L_q(\Om; \elln)}\\
				&\lesssim_q\|G\|_{L_q(\Om; \elln)}.
			\end{aligned}
		\end{equation}
		Here the last inequality is from Lemma~\ref{lem:diagonal}. Since
		\[
		\Delta^{-1/2}\Delta^{-a}
		=\Delta^{-(1/2+a)}=\Delta^{-1/p},
		\]
		we obtain for arbitrary $g_j$,
		\begin{equation}
			\sum_{j=1}^nD_j\Delta^{-1/2}\Delta^{-a}g_j
			=\Delta^{-1/p}\sum_{j=1}^nD_jg_j.
			\label{eq:power-factorization}
		\end{equation}
		
		Now we apply duality. For $h\in L^p(\Om)$, one has
		\begin{equation*}
			\begin{aligned}
				\langle T_ph,G\rangle
				&=\sum_{j=1}^n\E\left[
				D_j\Delta^{-1/p}h\,\overline{g_j}\right] =\sum_{j=1}^n\E\left[
				\Delta^{-1/p}h\,\overline{D_jg_j}\right]\\
				&=\sum_{j=1}^n\E\left[
				h\,\overline{\Delta^{-1/p}D_jg_j}\right] =\E\left[h\,\overline{
					\Delta^{-1/p}\sum_{j=1}^nD_jg_j}\right].
			\end{aligned}
		\end{equation*}
		Thus
		\begin{equation*}
			T_p^*G=\Delta^{-1/p}\sum_{j=1}^nD_jg_j.
		\end{equation*}
		Equations \eqref{eq:adjoint-factorization-bound}--\eqref{eq:power-factorization} give
		\[
		\|T_p^*G\|_q\lesssim_q \|G\|_{L_q(\Om; \elln)}.
		\]
		Therefore
		\[
		\|T_p\|_{L^p\to L^p(\Om; \elln)}
		=\|T_p^*\|_{L_q(\Om; \elln)\to L_q(\Om)}
		\lesssim_q 1.
		\]
		All constants depend only on $q=p/(p-1)$ but not on $n$.
	\end{proof}
	
	Finally, we come to the proof of Theorem \ref{thm:endpoint}.
	\begin{proof}[Proof of Theorem~\ref{thm:endpoint}]
		For arbitrary $f$, put $h=\Delta^{1/p}f$.  Formula \eqref{eq:fractional-powers} shows
		$h\in L_p^0$ and
		\[
		\Delta^{-1/p}h
		=\sum_{\varnothing\ne A\subseteq[n]}\widehat f(A)w_A
		=f-\widehat f(\varnothing)=f-\E f.
		\]
		By Proposition~\ref{prop:Tp}, we have
		\begin{align*}
			\|\nabla f\|_{L_p(\Om; \elln)}
			&=\|\nabla(f-\E f)\|_{L_p(\Om; \elln)} =\|\nabla\Delta^{-1/p}h\|_{L_p(\Om; \elln)}\\
			&=\|T_ph\|_{L_p(\Om; \elln)}\lesssim_q\|h\|_{p}=\|\Delta^{1/p}f\|_{p}.
		\end{align*}
		This finishes the proof.
	\end{proof}
	
	\
	
	{\bf AI Statement.} The authors acknowledge the use of AI tools, including ChatGPT, for language
	polishing, LaTeX editing, and exploratory mathematical discussions during the development and
	preparation of this manuscript. Some ideas used in the proof-development process arose during
	interactions with GPT-5.6 Sol. Suggestions from these AI interactions were subsequently examined, reformulated, incorporated into the manuscript, and independently verified by
	the authors. The authors take full responsibility for all mathematical content in the final manuscript.
	
	{\bf Acknowledgments.}  The authors thank Professor Alexandros Eskenazis for helpful communications, during which he pointed out the existence of Naor's original proof and an application of Theorem 1.1. The authors also thank Professor Quanhua Xu for helpful suggestions.

\end{document}